\documentclass[a4paper, 11pt, reqno, final]{amsart}
\usepackage{amsmath}
\usepackage{amssymb}
\usepackage{upgreek}
\usepackage{comment}
\usepackage{tikz}
\usepackage{color}
\usepackage[T1]{fontenc}
\usepackage[textwidth=440pt, textheight=650pt]{geometry}
\usepackage{enumerate}
\usepackage[hypcap=false]{caption}
\usepackage{graphicx}
\usepackage{showlabels}
\usepackage{hyperref}

\DeclareMathOperator{\argmax}{argmax}
\DeclareMathOperator{\argmin}{argmin}
\DeclareMathOperator{\N}{\mathbb{N}}
\DeclareMathOperator{\bbS}{\mathbb{S}}
\DeclareMathOperator{\bbT}{\mathbb{T}}
\DeclareMathOperator{\E}{\mathbb{E}}
\DeclareMathOperator{\bbK}{\mathbb{K}}

\DeclareMathOperator{\R}{\mathbb{R}}
\DeclareMathOperator{\Prob}{\mathbb{P}}

\DeclareMathOperator{\M}{\mathbf{m}_{\infty}}

\DeclareMathOperator{\dimA}{\dim\sb{\mathrm{A}}}

\DeclareMathOperator{\ex}{\mathbf{e}}

\DeclareMathOperator{\eps}{\varepsilon}
\DeclareMathOperator{\from}{\colon\;}

\DeclareMathOperator{\diam}{diam}

\DeclareMathOperator{\Cov}{Cov}

\newtheorem{theorem}{Theorem}[section]
\newtheorem{corollary}[theorem]{Corollary}
\newtheorem{lemma}[theorem]{Lemma}
\newtheorem{proposition}[theorem]{Proposition}
\newtheorem{question}[theorem]{Question}
\newtheorem{conjecture}[theorem]{Conjecture}
\newtheorem{definition}[theorem]{Definition}

\numberwithin{equation}{section}

\title[Quasi-symmetric embeddings of the Brownian map and CRTs]{On quasisymmetric embeddings of the Brownian map and continuum trees}
\author[S.~Troscheit]{Sascha Troscheit}
\thanks{ST was initially supported by \emph{NSERC Grants} 2016-03719 and 2014-03154, and the
University of Waterloo.}
\address[Sascha Troscheit]{Faculty of Mathematics\\University of Vienna\\Oskar Morgenstern Platz~1\\1090 Wien\\Austria.}
\email{sascha.troscheit@univie.ac.at}
\urladdr{https://www.mat.univie.ac.at/~troscheit/}
\date{\today}

\begin{document}
\begin{abstract}
  The Brownian map is a model of random geometry on the sphere and as such
  an important object in probability theory and physics.
  It has been linked to Liouville Quantum Gravity and much research has been devoted to it. One open question asks for a
  canonical embedding of the Brownian map into the sphere or other, more abstract, metric spaces.
  Similarly, Liouville Quantum Gravity has been shown to be ``equivalent'' to the Brownian map but
  the exact nature of the correspondence (i.e.\ embedding) is still unknown.
  In this article we show that any embedding of the Brownian map or continuum random tree
  into $\R^d$, $\bbS^d$, $\bbT^d$, or more generally any
  doubling metric space, cannot be quasisymmetric. We achieve this with the aid of dimension theory
  by identifying a metric structure that is invariant under
  quasisymmetric mappings (such as isometries) and which implies infinite Assouad dimension.

  We show, using elementary methods, that this structure is almost surely present in the Brownian continuum random tree and the
  Brownian map.
  We further show that snowflaking the metric is not sufficient to find an embedding and 
  discuss continuum trees as a tool to studying ``fractal functions''.
\end{abstract}

\maketitle

\section{Introduction}
\label{sect:introduction}
Over the past few years two important models of random geometry of the sphere $\bbS^2$ emerged. 
One was originally motivated by string theory and conformal field theory in physics
and is known as Liouville Quantum Gravity.
The other is known as the Brownian map, which originated from the study of large random planar
maps. 
The Brownian map turned out to be a universal limit of random planar maps of $\bbS^2$ and both
models have attracted a
great deal of interest over the past few years.

We say that a {map} is planar if it is an embedding of a finite connected graph onto $\bbS^2$ with no edge-crossings.
A {quadrangulation} is a planar map where
all its faces are incident to exactly four edges, where edges incident to only one face are counted
twice (for both ``sides'' of the edge).
Let $Q_n$ be the set of all rooted\footnote{A graph is said to be \textbf{rooted} if there exists a
unique oriented and distinguished edge, called the root.} quadrangulations with $n$ vertices. This set is finite and we randomly choose a specific quadrangulation $q_n\in Q_n$ with the uniform
probability measure. Let $d_{gr}$ be the graph metric\footnote{The distance $d_{gr}(v,w)$ between two
  vertices $v,w$ in a finite connected graph is equal to the length of the shortest path between $v$
and $w$. If $v=w$, we set $d_{gr}=0$.}. It was independently proven by Le Gall~\cite{LeGall13}
and by Miermont~\cite{Miermont13} that the sequence 
$(q_n,  n^{-1/4}d_{gr})$ converges in distribution to a random
limit object $(\M,D)$, known as the Brownian map, in the space isometry classes of all compact sets $(\bbK, d_{GH})$ equipped with
the Gromov-Hausdorff distance. Remarkably, Le Gall also established that
this object is universal in the sense that not
just quadrangulations, but also uniform triangulations and $2n$-angulations ($n\geq 2$) converge in distribution to
the same object, up to a rescaling that only depends on the type of $p$-angulation, 
see \cite{LeGall13} and the survey \cite{LeGall14}. Notably, the Brownian map is homeomorphic to
$\bbS^2$ but has Hausdorff dimension $4$, indicating that the homeomorphism is highly singular.
Finding such a canonical mapping is still an active research area and in this article we show, using
elementary facts from probability and dimension theory, that the Brownian map has an almost sure
metric property that we coin \emph{starry}. This property implies that the Brownian map and its
images under quasisymmetric mappings has infinite Assouad dimension and hence cannot be embedded by
quasisymmetric mappings (such as isometries) into finite dimensional manifolds.

The other important model of random surfaces is known as Liouville Quantum Gravity (LQG) which is defined in
terms of a real parameter $\gamma$. The LQG is a random geometry based on the Gaussian Free Field
(GFF), a
random construction that can be considered a higher dimensional variant of Brownian motion.
Crucially, the law of the GFF is conformally invariant.
When $\gamma = \sqrt{8/3}$, the resulting random surface has long
been conjectured to be equivalent to the Brownian map. 
Very recently, work began to unify the two models and although the exact nature of this equivalency
(that is, a canonical embedding) is still 
unknown, major progress was made by Miller and Sheffield in their series of works
\cite{Miller1,Miller2,Miller3}; see also the recent survey \cite{MillerSurvey}.
In particular, they showed that the Brownian map and LQG share the same axiomatic properties and
that there exists a homeomorphism of the Brownian map onto $\bbS^2$ which realises LQG.
As above, the quasisymmetrically invariant starry property implies that no such embedding
can be quasisymmetric.
In \cite{Gwynne19}, Gwynne, Miller, and Sheffield gave an explicit construction of the embedding as the limit of
conformal embeddings, showing that the discretised Brownian disk converges to the conformal
embedding of the continuum Brownian disk. The latter corresponds to $\sqrt{8/3}$-LQG. 

Before describing our results and background in detail, we remark that the Brownian map
can also be obtained from another famous random space: the Brownian continuum
random tree.
The Brownian continuum random tree (CRT) is a continuum tree that was introduced and studied by Aldous in
\cite{Aldous1,Aldous2,Aldous3}. It appears in many seemingly disjoint contexts such as the scaling
limit of critical Galton-Watson trees and Brownian excursions using a ``least intermediate point''
metric. 
This ubiquity led to the CRT becoming an important object in probability theory in its own right. 
The Brownian map can be constructed from the CRT by another change in metric and it is this
description that allows access to a very different set of probabilistic tools on which our proofs
are based.
In fact, we first prove that the CRT is starry and thus also cannot be embedded into finite
dimensional manifolds almost surely. Note that this is also in stark contrast with the
work on conformal weldings of the CRT in \cite{Lin18} and implies much higher singularity of
embeddings than previously known.

This article is structured as follows: In Section \ref{sect:AssouadDim}, we recall the definition of
the Assouad dimension and its use in embedding theory. We will also introduce the \emph{starry}
property for metric spaces in Section \ref{sect:starrySpaces} and show that it is invariant under
quasisymmetric mappings and implies infinite Assouad dimension.
In Section \ref{sect:CRT}, we define the Brownian continuum random tree via the Brownian excursion and
show that the CRT is starry almost surely. We conclude that it cannot be embedded into $\R^d$ for
any $d\in\N$ using quasisymmetric mappings.
In Section~\ref{sect:BrownianMap}, we define the Brownian map through the CRT and prove that the
Brownian map is starry, also. In particular, Theorem \ref{thm:main} states that quasisymmetric
images of starry metric spaces have infinite Assouad dimension (and are thus not doubling), Theorem
\ref{thm:CRTStarry} proves that the CRT is starry almost surely, and Theorem \ref{thm:mainBM} shows
that the Brownian map is starry.
Section~\ref{sect:dualDiscussion} finishes this article by containing a discussion of our results
from a fractal geometric point of view. 
Throughout, we postpone proofs until the end of their respective section.

\section{Assouad dimension and embeddings}
\label{sect:AssouadDim}
The Assouad dimension is an important tool in the study of embedding problems. It was first
introduced by Patrice Assouad in \cite{AssouadThesis}.
More recently, the exact determination of the Assouad dimension for random and deterministic subsets
of Euclidean space has revealed intricate relations with separation properties in the study of
fractal sets. Notably, the Assouad dimension of random sets tends to be ``as big
as possible'', see e.g.\ \cite{FraserMiaoTroscheit}, and that for self-conformal subsets of $\R$
Ahlfors regular is equivalent to the Assouad dimension coinciding with the Hausdorff
dimension, see \cite{Angelevska18}.

\subsection{Assouad dimension}
\label{sect:AssouadSub}
Formally, let $(X,d)$ be a metric space and write $N(X,r)$ for the minimal number of sets of
diameter at most $r$ needed to cover $X$. We set $N(X,r)=\infty$ if no such collection exists. 

Let $B(x,r)$ be the closed ball in $X$ of radius $r>0$. The Assouad dimension is then given by 
\begin{multline}\label{eq:AssouadDefn}
  \dimA(X)= \inf \Bigg\{ \alpha \  : \ ( \exists  \, C>0) \, (\forall \, 0<r<R<1) \,
    \sup_{x\in X}  N\big( B(x,R), r \big) \ \leq \ C \left(\frac{R}{r}\right)^\alpha  \Bigg\}.
\end{multline}

There are several important generalisations and variations of the Assouad dimension such as the
quasi-Assouad dimension and the Assouad spectrum. The latter fixes the relationship between $r$ and
$R$ by a parameter $\theta$, i.e.\ $r=R^{1/\theta}$, whereas the former is a slightly more
regularised version of the Assouad dimension. 
One could certainly ask the questions that arise here of these variants and we forward the
interested reader to the survey~\cite{FraserSurvey} for an overview.

The Assouad dimension is always an upper bound to the Hausdorff dimension but coincides in many
``natural'' examples such as $k$-dimensional Riemannian manifolds. However, they can also differ
widely in general metric spaces and it is possible to construct a space $X$ such that $X$ is
countable with Hausdorff dimension $0$ but has infinite Assouad dimension; see e.g.\
Proposition~\ref{thm:notStarry}. 
These sets are however somewhat pathological and it would be interesting to find `natural' metric
spaces that have low Hausdorff and box-counting dimension but are `big' in the sense of Assouad
dimension. A natural candidate are random sets since they tend to have a regular average behaviour,
giving low Hausdorff dimension, but have rare but very `thick' regions; see
\cite{FraserMiaoTroscheit,FraserTroscheit,TroscheitRecursive,TroscheitThreshold}. These regions are detected
by the Assouad dimension and, as we will show in this article, are sufficient to give infinite Assouad
dimension for the Brownian map and CRT.

The Assouad dimension is strongly related to the metric notion of doubling: a metric space has
finite Assouad dimension if and only if it is doubling.
Further, the Assouad dimension is invariant under bi-Lipschitz mappings and is a useful indicator when a
space is or is not embeddable. Because of this invariance, a metric space $(X,d)$ with Assouad dimension
$s_a=\dimA X$ cannot be embedded into $\R^{\lceil s_a \rceil-1}$ with bi-Lipschitz mappings. The
converse is not quite true, but ``snowflaking'' the metric by some $\alpha>0$ allows a bi-Lipschitz
embedding~\cite{AssouadThesis}.
\begin{theorem}[Assouad Embedding Theorem]
  \label{thm:Assouad}
  Let $(X,d)$ be a metric space with finite Assouad dimension. Then there exists $C>1$, $N\in\N$, $1/2<\alpha
  <1$, and an injection $\phi:X\to\R^N$ such that
  \[
    C^{-1}d(x,y)^\alpha \leq |\phi(x)-\phi(y)| \leq C d(x,y)^\alpha\quad\forall x,y\in X
  \]
\end{theorem}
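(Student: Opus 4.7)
The plan is to exploit the equivalence between finite Assouad dimension and the doubling property: there is $M=M(\dimA X)$ such that every ball of radius $2R$ can be covered by $M$ balls of radius $R$. For each integer $k$ I would fix a maximal $2^{-k}$-separated net $X_k\subset X$; doubling forces any ball of radius $c\cdot 2^{-k}$ to meet at most $N_0=N_0(M,c)$ points of $X_k$, and a greedy argument then colours $X_k$ with $N_0$ colours so that any two same-coloured points lie at distance at least $10\cdot 2^{-k}$ apart.

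For each colour $i\in\{1,\dots,N_0\}$ and each scale $k$, define a bump
\[
  \phi_{k,i}(x) \;=\; 2^{-k\alpha}\,\psi\bigl(2^k\,d(x, X_k^{(i)})\bigr),
\]
where $\psi\from [0,\infty)\to[0,1]$ is $1$-Lipschitz, equal to $1$ on $[0,1]$ and vanishing on $[2,\infty)$, and $X_k^{(i)}$ denotes the colour-$i$ subset of $X_k$. To keep the target finite-dimensional, group scales modulo a large integer $L$ by setting $\Phi_{j,i}(x)=\sum_{k\equiv j\pmod{L}}\phi_{k,i}(x)$ for $j\in\{0,\dots,L-1\}$; then $\phi=(\Phi_{j,i})_{j,i}\from X\to\R^{LN_0}$ is the candidate embedding, so $N=LN_0$.

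The upper bound is scale-by-scale: since $\phi_{k,i}$ is bounded by $2^{-k\alpha}$ and has Lipschitz constant at most $2^{k(1-\alpha)}$, one obtains $|\phi_{k,i}(x)-\phi_{k,i}(y)|\leq C\min\bigl(2^{-k\alpha},\,2^{k(1-\alpha)}d(x,y)\bigr)$, and summing the resulting two-sided geometric series over all $k$ yields $|\phi(x)-\phi(y)|\leq C\,d(x,y)^\alpha$. For the lower bound, given $x,y$ with $2^{-k_0-1}\leq d(x,y)<2^{-k_0}$, pick a net point $p\in X_{k_0+c}$ with $d(x,p)\leq 2^{-(k_0+c)}$ and let $i=i(p)$; then at the distinguished scale $k_0+c$ one has $\phi_{k_0+c,i}(x)\geq \tfrac{1}{2}\,2^{-(k_0+c)\alpha}$ while $\phi_{k_0+c,i}(y)=0$ for a suitable choice of $c$, since $y$ lies outside the support of the bump around $p$ and the separation of $X_{k_0+c}^{(i)}$ rules out any other same-colour point spoiling this.

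The main obstacle is the interaction of $\phi_{k_0+c,i}$ with its siblings $\phi_{k,i}$ for $k\equiv k_0+c\pmod{L}$ and $k\neq k_0+c$, which could in principle cancel the gap at the distinguished scale. The resolution is a careful choice of parameters: take $L$ large enough that consecutive scales in the same residue class differ by a factor of $2^L$, so the tails form geometric series with ratios $2^{-L\alpha}$ (finer scales) and $2^{L(1-\alpha)}$ (coarser scales, after invoking the Lipschitz estimate and the bound $d(x,y)\leq 2^{-k_0}$). Choosing $L$ sufficiently large and then $\alpha\in(1/2,1)$ sufficiently close to $1$ makes both tails strictly smaller than the signal $\tfrac12\,2^{-(k_0+c)\alpha}$, yielding $|\phi(x)-\phi(y)|\geq c\,d(x,y)^\alpha$ and hence the desired bi-Lipschitz bound. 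The lower restriction $\alpha>1/2$ reflects this decoupling condition and is the reason one must snowflake the metric rather than embed it directly.
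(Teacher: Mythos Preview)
The paper does not contain a proof of this theorem. It is quoted as a classical result, attributed to Assouad's thesis~\cite{AssouadThesis}, with the remark that explicit bounds on $N$ and $\alpha$ may be found in~\cite{David12}; no argument is given beyond that citation. So there is nothing in the paper to compare your proposal against.

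That said, your outline is the standard Assouad construction and is essentially correct: doubling gives bounded multiplicity of the dyadic nets, a greedy colouring separates same-colour points at each scale, the bump functions $\phi_{k,i}$ are then assembled into finitely many coordinates by reducing $k$ modulo $L$, and the upper and lower bi-Lipschitz bounds follow from the two-sided geometric tail estimates you describe. One small correction: the constraint $\alpha>1/2$ is not forced by the decoupling. If you carry out your own tail computation carefully, the coarse-scale tail in a fixed residue class is controlled by $2^{-L(1-\alpha)}$ and the fine-scale tail by $2^{-L\alpha}$, so for \emph{any} fixed $\alpha\in(0,1)$ both tails can be made arbitrarily small by taking $L$ large. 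The theorem as stated merely asserts existence of some $\alpha\in(1/2,1)$, which is of course compatible with this, but your final sentence misattributes the origin of that lower bound.
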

Explicit bounds on $N$ and $\alpha$ can be obtained from the Assouad dimension, see e.g.~\cite{David12}.

\subsection{Starry metric spaces}
\label{sect:starrySpaces}
While the Assouad dimension is invariant under bi-Lipschitz mappings, this is not true for the more
general notion of quasisymmetric mappings.
We recall the definition of a quasisymmetric mapping.

\begin{definition}
  Let $(X,d_X)$ and $(Y,d_Y)$ be metric spaces. A homeomorphism $\phi\from X \to Y$ is called
  \textbf{$\Psi$-quasisymmetric} if there exists an increasing function $\Psi\from(0,\infty)\to(0,\infty)$ such
  that for any three distinct points $x,y,z\in X$,
  \begin{equation*}
    \frac{d_Y(\phi(x),\phi(y))}{d_Y(\phi(x),\phi(z))} \leq \Psi\Big(\frac{d_X(x,y)}{d_X(x,z)}\Big).
  \end{equation*}
\end{definition}
A basic example of quasi-symmetric mappings are isometric embeddings but the notion of
quasisymmetric mappings generalises it by allowing a uniformly controlled distortion.
In Euclidean space quasisymmetric mappings correspond to quasi-conformal mappings. 
That is, if $\phi\from
\Omega\to\Omega'$, where $\Omega,\Omega'\subset \R^d$ are open and $\phi$ is $\Psi$-quasisymmetric, then $\phi$ is
also $K$-quasiconformal for some $K$ depending only on the function $\Psi$. A similar statement
holds in the other direction.

In \cite{MackayBook}, Mackay and Tyson study the Assouad dimension under symmetric mappings. In
particular, it is possible to lower the Assouad dimension by quasisymmetric mappings (see also
\cite{Tyson01}) and they introduce
the notion of the conformal Assouad dimension. The conformal Assouad dimension of a metric space
$(X,d)$ is defined as the infimum of the Assouad dimension of all quasisymmetric images of $X$.
This notion has been subsequently explored for many examples of deterministic sets, such as self-affine
carpets in \cite{Kaenmaki18}.

Here we define the structure of an approximate $n$-star, which, heuristically, is the property that a space contains
$n$ distinct points that are roughly equidistant to a central point with every geodesic between them
going near the centre, the extent of which is controlled by the parameters $A$ and $\mu$.
\begin{definition}
  A metric space $(X,d)$ is said to contain an \textbf{$(A,\eta)$-approximate $n$-star} if there
  exists $A>1$ and  $0<\eta<1$, $\varrho>0$, such that $A-\eta > 1+\eta \Leftrightarrow
  \eta<(A-1)/2$ as well as $\varrho>0$ and a set of points
  $\{x_0,\dots,x_n\}\subseteq(X,d)$ satisfying
  \[
    (A-\eta)\varrho\leq d(x_i,x_j)\leq A\varrho\text{ for all }i,j\in\{1,\dots,n\}
    \text{ with } i\neq j
  \]
  and
  \[
    \varrho\leq d(x_0,x_i)\leq (1+\eta) \varrho \text{ for all }i\in\{1,\dots,n\}.
    \]
    We say that a metric space $(X,d)$ is \textbf{starry} if there exist uniform $A>1$ and $0<\eta<(A-1)/2$ such that
    $(X,d)$ contains an $(A_n,\eta)$-approximate $n$-star for all $n$, where $A\leq A_n$.
\end{definition}
We note that any $(A,\eta)$-approximate $n$-star is also an $(A,\zeta)$-approximate $m$-star for all
$m\leq n$ and $\zeta\geq \eta$, provided that $1+\zeta < A-\zeta$.
Further, $A$ is always bounded above by the triangle inequality, since $(A-\eta)\varrho \leq
d(x_1,x_2)\leq d(x_1,x_0)+d(x_0,x_2)\leq 2(1+\eta)\varrho$, giving $1<1+2\eta<A\leq 2+3\eta<5$.

Our main theorem in this section states that all quasisymmetric images of starry metric spaces
(including the identity) have infinite Assouad dimension. Essentially, being starry means that the
conformal Assouad dimension is maximal.

\begin{theorem}\label{thm:main}
  Let $(X,d)$ be a starry metric space and 
  let $\phi$ be a quasisymmetric mapping $\phi:(X,d)\to(\phi(X),d_\phi)$.
  Then $\dimA\phi(X) = \infty$.
\end{theorem}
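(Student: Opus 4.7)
The plan is to show that a quasisymmetric map sends an approximate $n$-star to a configuration of $n$ points lying in some closed ball of radius $R$ in $\phi(X)$ and pairwise separated by at least $cR$, where $c>0$ depends only on $\Psi$, $A$ and $\eta$ (not on $n$). Since the starry property permits $n$ to be arbitrarily large while $R/(cR)=1/c$ is fixed, this forces $\dimA\phi(X)=\infty$ directly from the definition \eqref{eq:AssouadDefn}.

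For the key estimate, I would fix, for each $n$, an $(A_n,\eta)$-approximate $n$-star $\{x_0,x_1,\dots,x_n\}$ at scale $\varrho$ in $X$, and set $R := \max_{1\leq i\leq n} d_\phi(\phi(x_0),\phi(x_i))$. Since $d(x_0,x_i)/d(x_0,x_j)\leq 1+\eta$ for all $i,j\geq 1$, applying the quasisymmetry inequality with basepoint $x_0$ yields $d_\phi(\phi(x_0),\phi(x_i))\geq R/\Psi(1+\eta)$ for every $i$. For pairwise distances I would apply the quasisymmetry inequality with basepoint $x_i$ to the triple $(x_i,x_0,x_j)$: using $d(x_i,x_0)/d(x_i,x_j)\leq (1+\eta)/(A_n-\eta)\leq (1+\eta)/(A-\eta)$ and monotonicity of $\Psi$, one gets
\[
  d_\phi(\phi(x_i),\phi(x_j))\geq \frac{d_\phi(\phi(x_i),\phi(x_0))}{\Psi\!\bigl((1+\eta)/(A-\eta)\bigr)} \geq cR,
\]
where $c := \bigl[\Psi(1+\eta)\,\Psi\bigl((1+\eta)/(A-\eta)\bigr)\bigr]^{-1}>0$ is independent of $n$.

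To conclude, the ball $\Bbar(\phi(x_0),R)\subset\phi(X)$ contains $n$ points mutually separated by at least $cR$, so any cover of it by sets of diameter $cR/2$ requires at least $n$ elements. Setting $r:=cR/2$, one has $N(\Bbar(\phi(x_0),R),r)\geq n$ while $R/r=2/c$ is a fixed constant. For any candidate exponent $\alpha\geq 0$, the ratio $N(\Bbar(\phi(x_0),R),r)/(R/r)^\alpha \geq n\,(c/2)^\alpha$ can be made arbitrarily large by the starry hypothesis, so no finite $C$ satisfies the defining inequality in \eqref{eq:AssouadDefn}. Hence $\dimA\phi(X)=\infty$.

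The only subtle point is ensuring that the $\Psi$-constants are genuinely $n$-independent, and this is exactly why the definition of \emph{starry} fixes $A$ and $\eta$ once and for all while only allowing $A_n\geq A$ to grow; by monotonicity of $\Psi$ this keeps both $\Psi(1+\eta)$ and $\Psi((1+\eta)/(A-\eta))$ as fixed quantities throughout the argument.
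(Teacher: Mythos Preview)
Your proof is correct and follows essentially the same approach as the paper's: both exploit quasisymmetry at the basepoint $x_0$ to show the images $\phi(x_i)$ lie at comparable distances from $\phi(x_0)$, then apply quasisymmetry at basepoint $x_i$ to obtain a uniform lower bound on pairwise separations, yielding $n$ well-separated points inside a ball whose radius-to-separation ratio depends only on $\Psi,A,\eta$. The only cosmetic differences are that the paper frames it as a contradiction and normalises with $\min_k d_\phi(\phi(x_0),\phi(x_k))$ rather than your $\max$, and it bounds $(1+\eta)/(A-\eta)$ by $1$ before applying $\Psi$; your version with $\Psi((1+\eta)/(A-\eta))$ is in fact slightly sharper.
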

Recall that a metric space $(X,d)$ is said to be \textbf{doubling} if there exists a constant $K>0$
such that the ball $B(x,r)\subseteq X$ can be covered by at most $K$ balls of radius $r/2$ for all
$x\in X$ and $r>0$.
Given that the Assouad dimension of a metric space is finite if and only if it is
doubling, see e.g.~\cite[Theorem 13.1.1]{FraserBook}, we obtain
\begin{corollary}\label{thm:mainCor}
  Let $(X,d_X)$ be a starry metric space and $(Y,d_Y)$ be a doubling metric space. Then $X$ is not
  doubling and any embedding $\phi\from X\to Y$ cannot be quasisymmetric.
\end{corollary}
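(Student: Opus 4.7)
The plan is to reduce the corollary directly to Theorem \ref{thm:main} combined with the equivalence between doubling and finite Assouad dimension cited in the preceding paragraph.

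For the first assertion, that $X$ is not doubling, I would apply Theorem \ref{thm:main} to the identity mapping $\id \from X \to X$, which is trivially $\Psi$-quasisymmetric with $\Psi(t) = t$. The conclusion $\dimA X = \infty$, together with the cited characterisation, immediately gives that $X$ cannot be doubling.

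For the second assertion, I would argue by contradiction. Suppose $\phi \from X \to Y$ were a quasisymmetric embedding into the doubling space $Y$. Viewing $\phi$ as a quasisymmetric homeomorphism onto its image $(\phi(X), d_Y|_{\phi(X)})$, Theorem \ref{thm:main} forces $\dimA \phi(X) = \infty$. On the other hand, $\phi(X)$ is a subset of the doubling space $Y$; since the doubling property is hereditary (any cover of a $Y$-ball by half-radius balls in $Y$ restricts to a cover of its intersection with $\phi(X)$ by the same number of sets of diameter at most the original radius, inflating the doubling constant by at most a fixed factor when one recentres on points of $\phi(X)$), $\phi(X)$ itself is doubling and in particular has finite Assouad dimension. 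This contradicts $\dimA \phi(X) = \infty$, completing the argument.

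The only mildly technical step is the hereditary nature of doubling, equivalently the monotonicity of the Assouad dimension under taking subsets, but this is a standard covering argument and does not present a genuine obstacle. The essential content of the corollary is therefore entirely absorbed by Theorem \ref{thm:main} and the standard fact that finiteness of the Assouad dimension characterises doubling metric spaces.
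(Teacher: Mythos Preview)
Your proposal is correct and matches the paper's approach exactly: the paper does not give a separate proof of the corollary but simply states that it follows from Theorem~\ref{thm:main} together with the cited equivalence between doubling and finite Assouad dimension. Your write-up just makes explicit the two applications of Theorem~\ref{thm:main} (to the identity and to~$\phi$) and the monotonicity of Assouad dimension under subsets, all of which are implicit in the paper's one-line derivation.
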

It is a simple exercise to see that any $s$-Ahlfors regular space\footnote{A metric space $X$ is $s$-Ahlfors
  regular if it supports a Radon measure $\mu$ such that $\mu(B(x,r))\sim r^s$ for all $x\in X$ and
$0<r<\diam X$.} has Assouad (and Hausdorff) dimension equal to $s$ and so $\R^d$ has Assouad
dimension $d$. Similarly, any $d$-dimensional Riemannian space is $d$-Ahlfors regular, as it supports a
$d$-regular volume measure.
Our result immediately implies that any starry metric space cannot be embedded into such finite
dimensional spaces by quasisymmetric (and hence bi-Lipschitz) mappings.

Careful observation of the estimates in the proof of Theorem~\ref{thm:main} shows that
snowflaking does not allow a quasisymmetric or
bi-Lipschitz embedding. That is, there is no analogy of the Assouad embedding theorem
(Theorem~\ref{thm:Assouad}) for starry
metric spaces and we get the stronger statement
\begin{corollary}
  Let $(X,d_X)$ be a starry metric space and let $(Y,d_Y)$ be a doubling metric space (such as an
  $s$-Ahlfors regular space with $s\in
  [0,\infty)$). Let $\alpha\in (0,1]$, $\Psi\from
  (0,\infty)\to(0,\infty)$ be an increasing function, and $\phi\from X\to Y$ be an embedding of $X$ into $Y$. Then
  $\phi$ cannot satisfy
  \begin{equation*}
    \frac{d_Y(\phi(x),\phi(y))}{d_Y(\phi(x),\phi(z))} \leq
    \Psi\Big(\frac{d_X(x,y)^{\alpha}}{d_X(x,z)^{\alpha}}\Big)
  \end{equation*}
  for distinct $x,y,z\in X$.
\end{corollary}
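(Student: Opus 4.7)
The plan starts by reinterpreting the hypothesis. Since $0<\alpha\leq 1$, the function $d_X^\alpha$ is a metric on $X$ (the standard snowflake construction), and the displayed inequality is precisely the statement that $\phi\from(X,d_X^\alpha)\to(Y,d_Y)$ is a $\Psi$-quasisymmetric homeomorphism onto its image. The strategy is then to apply Theorem~\ref{thm:main} to this map, after first verifying that the starry property is stable under snowflaking, and to derive a contradiction with $Y$ being doubling.

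Given an $(A_n,\eta)$-approximate $n$-star $\{x_0,\dots,x_n\}$ at scale $\varrho$ in $(X,d_X)$, the same points, viewed in $(X,d_X^\alpha)$ at scale $\varrho_\alpha=\varrho^\alpha$, satisfy
\[
  (A_n-\eta)^\alpha\varrho^\alpha\leq d_X(x_i,x_j)^\alpha\leq A_n^\alpha\varrho^\alpha,\qquad \varrho^\alpha\leq d_X(x_0,x_i)^\alpha\leq (1+\eta)^\alpha\varrho^\alpha.
\]
This is an $(A_n^\alpha,\eta_\alpha)$-approximate $n$-star, provided $\eta_\alpha\geq\max\bigl(A_n^\alpha-(A_n-\eta)^\alpha,\,(1+\eta)^\alpha-1\bigr)$. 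Since $A_n\geq A>1+\eta$ and $\alpha\leq 1$, concavity of $t\mapsto t^\alpha$ implies that $t\mapsto t^\alpha-(t-\eta)^\alpha$ is decreasing on $[A,\infty)$, so the supremum (over $n$) of the relevant quantity is attained at $A_n=A$. Hence a single constant $\eta_\alpha$ depending only on $A,\eta,\alpha$ works uniformly in $n$.

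Next, one verifies the admissibility condition $\eta_\alpha<(A^\alpha-1)/2$. For $\alpha$ close to $1$ this is immediate from $\eta<(A-1)/2$, and for general $\alpha$ one can either enlarge $A$ in the starry property by using that any approximate $N$-star yields approximate $n$-stars for $n\leq N$ with the same parameters, or note that the quantity $A_n^\alpha-(A_n-\eta)^\alpha$ tends to $0$ as $A_n\to\infty$ and so shrinking $\eta_\alpha$ by working with a subfamily of stars whose $A_n$ are sufficiently large is sufficient. Either way, $(X,d_X^\alpha)$ is starry. Then Theorem~\ref{thm:main} applied to $\phi\from(X,d_X^\alpha)\to(Y,d_Y)$ gives $\dimA\phi(X)=\infty$, contradicting $\phi(X)\subseteq Y$ with $Y$ doubling (since doubling is equivalent to finite Assouad dimension, Corollary~\ref{thm:mainCor}).

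The main technical obstacle is the verification of the snowflaked admissibility $\eta_\alpha<(A^\alpha-1)/2$, which can be delicate when $\alpha$ and $A-1$ are both small. This obstacle is mild: it may be sidestepped by observing that the proof of Theorem~\ref{thm:main} only uses the ratios of distances among the points of an approximate $n$-star (together with $\Psi$-control), and these ratios in the snowflake are just the $\alpha$-th powers of the original ratios and so remain bounded uniformly in $n$. Consequently one can either repeat the proof of Theorem~\ref{thm:main} verbatim for the snowflaked data or clean up the parameters as above; the essential content is that snowflaking cannot destroy the ``many approximately equidistant points'' structure on which the argument depends.
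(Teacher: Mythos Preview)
Your fallback in the final paragraph is exactly the paper's approach: the paper does not give a separate argument, but simply remarks that ``careful observation of the estimates in the proof of Theorem~\ref{thm:main}'' shows the snowflaked statement follows. Concretely, in that proof the only inputs are the ratio bounds $d_X(x_0,x_i)/d_X(x_0,x_j)\leq 1+\eta$ and $d_X(x_0,x_i)/d_X(x_i,x_j)\leq (1+\eta)/(A-\eta)\leq 1$; raising these to the power $\alpha$ gives $(1+\eta)^\alpha$ and something still $\leq 1$, so one replaces $\Psi(1+\eta)$ by $\Psi((1+\eta)^\alpha)$ and the contradiction $n_k>C(4\Psi(1)\Psi((1+\eta)^\alpha))^s$ goes through verbatim. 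So the essential content of your proposal is correct and aligns with the paper.

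However, your primary route---showing that $(X,d_X^\alpha)$ is itself starry in the sense of the paper's definition and then applying Theorem~\ref{thm:main} as a black box---is more delicate than you indicate, and the two fixes you suggest for the admissibility condition $\eta_\alpha<(A^\alpha-1)/2$ do not work. You propose either ``enlarging $A$'' or passing to a subfamily with $A_n\to\infty$, but the paper notes (just after the definition of starry) that the triangle inequality forces $A_n\leq 2+3\eta<5$ for every approximate star, so $A_n$ cannot be made large. And indeed, for e.g.\ $A_n\equiv 2$, $\eta$ close to $1/2$ and $\alpha$ small, one computes $2(1+\eta)^\alpha-1>A_n^\alpha$, so no admissible $\eta_\alpha$ exists. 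Thus the clean ``snowflaked space is starry'' statement may genuinely fail with the paper's precise definition, and the direct re-reading of the proof (your last paragraph, and the paper's actual argument) is the correct way to proceed.
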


In analogy to the observation that every set $X\subset\R^d$ with Assouad dimension $d\in\N$ must
contain $[0,1]^d$ as a weak tangent, see \cite{FraserHanBLMS}, one might think that  
a metric space with infinite Assouad dimension must also be starry. However, that is not true.
\begin{proposition}\label{thm:notStarry}
  There exists a countable and bounded metric space with infinite Assouad dimension that is not starry.
\end{proposition}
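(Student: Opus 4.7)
The plan is to construct an explicit countable bounded metric space built as a disjoint union of finite equidistant clusters at geometrically decaying scales, and then exploit the rigidity of its distance set to rule out approximate stars. Concretely, I would take $X = \bigsqcup_{n \geq 2} X_n$, where each $X_n$ consists of $n$ distinct points at pairwise distance $2^{-n}$, and declare $d(x,y) = 1$ whenever $x,y$ lie in different blocks. The function $d$ is trivially a metric because every positive distance lies in $S := \{2^{-k} : k \geq 0\}$, whose maximum is $1$, so the triangle inequality is automatic. Countability and $\diam X = 1$ are immediate. For the lower bound on the Assouad dimension, I would fix $x \in X_n$, set $R = 2^{-n}$ and $r = R/3$: then $B(x,R) \supseteq X_n$ contains $n$ points at mutual distance $2^{-n} > r$, so $N(B(x,R), r) \geq n$ while $R/r = 3$, forcing $\dimA X = \infty$.

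The main task is to show $X$ is not starry. I would argue by contradiction: suppose $\{x_0, \ldots, x_n\}$ is an $(A',\eta)$-approximate $n$-star in $X$ with $n \geq 2$ and $A' > 1+2\eta$. Because any two distinct elements of $S$ differ by a factor of at least $2$, and the ratio of any two central distances lies in $[1/(1+\eta),\,1+\eta] \subset (1/2,2)$, all central distances must coincide in a common value $d_C \in S$; the analogous comparison applied to lateral distances yields a common $d_L \in S$. The star inequalities then force $1 < d_L/d_C \leq A' \leq 2+3\eta < 5$, so $d_L/d_C \in \{2,4\}$. In particular $d_C < 1$, so $d_C = 2^{-k}$ for some $k \geq 1$. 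The crucial step is that $d(x_0, x_i) = 2^{-k} < 1$ forces every $x_i$ to lie in the same block as $x_0$, namely $X_k$; but then every lateral distance is also $2^{-k} = d_C$, contradicting $d_L > d_C$.

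I do not anticipate serious obstacles; the construction is natural and every estimate is elementary. The only point requiring care is that the triangle-inequality bound $A \leq 2 + 3\eta$ still permits the ratio $d_L/d_C$ to equal $2$ or $4$, so one must verify that neither value is realisable. The argument above handles both cases uniformly via the single mechanism that any $d_C < 1$ locks all $x_i$ into a single block of $X$, which collapses $d_L$ onto $d_C$.
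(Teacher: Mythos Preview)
Your construction and argument are correct, and they take a cleaner route than the paper's own proof. The paper builds a space on $\N\times\N$ in which each ``level'' $n$ carries $n$ equidistant points at scale $2^{-n}$ (as in your $X_n$), but the cross-level distances are given by partial sums $\sum_{k=m}^{m'} k^{-2}$ rather than the constant value $1$. This forces a longer verification of the triangle inequality and a more delicate not-starry argument: the paper must locate the putative star centre, split the annulus into a ``left'' and ``right'' part, and then use the inequality $p^{-2} < 2^{-p}$ to bound the size of any candidate star.

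Your choice of cross-block distance $1$ makes the distance set $\{2^{-k}:k\geq 0\}$ genuinely $2$-separated in the multiplicative sense, and this single observation does all the work: the central distances collapse to a single value $d_C$, the lateral distances to a single value $d_L$, and then $d_C<1$ forces everything into one block, whence $d_L=d_C$. This is more transparent than the paper's analysis. The trade-off is that the paper's example is ``connected at infinity'' (the blocks accumulate along a geodesic-like spine), which may feel closer to the tree and map examples that motivate the section; your space is totally disconnected with an ultrametric flavour. For the bare statement of the proposition, however, your example is preferable.

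Two small remarks. First, your one-line justification of the triangle inequality (``maximum distance is $1$'') is not literally sufficient in general, but the actual check is trivial: if $d(x,z)=1$ then $y$ cannot share a block with both $x$ and $z$, and if $d(x,z)=2^{-n}$ then either $y\in X_n$ or $d(x,y)=d(y,z)=1$. Second, once you know all central distances equal $d_C$, the triangle inequality applied to $x_i,x_0,x_j$ already gives $d_L\leq 2d_C$, so $d_L/d_C=2$ is forced and the case $d_L/d_C=4$ never arises; but your uniform treatment of both cases is of course also fine.
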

We will give an example in the next section.

\subsection{Proofs for Section \ref{sect:AssouadDim}}
\begin{proof}[Proof of Theorem \ref{thm:main}]
  We argue by contradiction. Assume $\dimA \phi(X)<\infty$. Then there exists $s>0$ and $C>1$ such
  that $N(B_{d_\phi}(x,R),r) \leq C\cdot (R/r)^s$ for all $0<r<R<\diam_{d_\phi} \phi(X)$ and $x\in
  \phi(X)$.  We assume that $X$ is starry and thus there exist $A$ and $\eta$ such that $X$ has
  $(A,\eta)$-approximate $n_k$-stars.
  Pick $k$ such that $n_{k} > C\cdot (4\Psi(1)\Psi(1+\eta))^s$, where $\Psi:(0,\infty)\to[0,\infty)$ is the scale
  distortion function of the quasisymmetric mapping $\phi$.
  
  Let $x_i$, $\varrho$, be the points and size of the approximate $n_k$-star.
  First note that distances relative to the centre
  $x_0$ are preserved. That is, for all $i \in \{1,\dots,n_k\}$,
  \[
    \frac{d_\phi(\phi(x_0),\phi(x_i))}{d_\phi(\phi(x_0),\phi(x_j))}
    \leq \Psi\left(\frac{d(x_0,x_i)}{d(x_0,x_j)}\right)
    \leq \Psi\left(\frac{(1+\eta)\varrho}{\varrho}\right)\leq \Psi(1+\eta).
  \]
  One deduces a lower bound by taking the inverse 
  \[
    \frac{1}{\Psi(1+\eta)}\leq
  \frac{d_\phi(\phi(x_0),\phi(x_i))}{d_\phi(\phi(x_0),\phi(x_j))}\leq \Psi(1+\eta) 
  \]
  for all $i,j\in\{1,\dots,n_k\}$.
  Note that
  $B_{d_\phi}(\phi(x_0),R) = \{y\in \phi(X) \,: \, d_\phi(\phi(x_0),y)\leq R\}$ 
  contains $\{\phi(x_0),\dots,\phi(x_n)\}$ for  $R=\Psi(1+\eta)\min_k d_\phi(\phi(x_0),\phi(x_k))$.

  Let $i\neq j\in\{1,\dots,n_k\}$. We estimate
  \[
    \frac{d_\phi(\phi(x_0),\phi(x_i))}{d_\phi(\phi(x_i),\phi(x_j))}
    \leq \Psi\left(\frac{d(x_0,x_i)}{d(x_i,x_j)}\right) 
    \leq \Psi \left(\frac{1+\eta}{A-\eta}\right) \leq \Psi(1).
    \]
  and so obtain $d_\phi(\phi(x_i),\phi(x_j)) \geq d_\phi(\phi(x_0),\phi(x_i))/\Psi(1)\geq
  (1/\Psi(1))\min_k d_\phi(\phi(x_0),\phi(x_k))$.
  Set $r=(1/(4\Psi(1)))\min_kd_\phi(\phi(x_0),\phi(x_k))$ and let $y_i,y_j\in\phi(X)$ be such that $\phi(x_i)\in B_{d_\phi}(y_i,r)$
  and $\phi(x_j)\in  B_{d_\phi}(y_j,r)$ for $i\neq j \in\{1,\dots,n_k\}$.
  Then,
  \begin{align*}
    B_{d_\phi}(y_i,r) \cap B_{d_\phi}(y_j,r)
    &=\{z\in\phi(X)\,:\, d_\phi(z,y_i) < r \text{ and } d_\phi(z,y_j)<r\}\\
    &\subseteq \{z\in\phi(X)\,:\, d_\phi(z,\phi(x_i))<2r \text{ and }d_\phi(z,\phi(x_j))<2r\}\\
    &\subseteq \{z\in\phi(X)\,:\, d_\phi(\phi(x_i),z)+d_\phi(z,\phi(x_j)) <
    4r=(1/\Psi(1))\min_k d_\phi(\phi(x_0),\phi(x_i))\}\\
    &=\varnothing
  \end{align*}
  by the triangle inequality and our estimate\footnote{This also
  shows that the starry property is invariant under quasisymmetric mappings.} 
  for $d_\phi(\phi(x_i),\phi(x_j))$. 
  We conclude that any $r$-cover of $\{\phi(x_1),\dots, \phi(x_{n_k})\}$ must consist of at least
  $n_k$ balls as no single $r$-ball can cover two distinct points. Hence,
  \[N(B_{d_\phi}(\phi(x_0), R), r)\geq n_k\quad\text{ and }\quad
  \frac{R}{r}=4\Psi(1)\Psi(1+\eta).
  \]But then
  \[
    C (4\Psi(1)\Psi(1+\eta))^s < n_k \leq N(B_{d_\phi}(\phi(x_0), R), r)
    \leq C \Big(\frac{R}{r}\Big)^s = C (4 \Psi(1+\eta)\Psi(1))^s,
  \]
  a contradiction.
\end{proof}

\begin{proof}[Proof of Proposition~\ref{thm:notStarry}]
  We construct an explicit example of a metric space that is bounded, has infinite Assouad
  dimension, but is not starry.
  Consider the set of points $(n,m)\in\N\times\N$ with the pseudometric
  \[
    d((n,m),(n',m')) = \begin{cases} 0 &\text{if } n=n'\text{ and }m=m'\\
      0&\text{if } n=n', \min\{m,m'\}=1 \text{ and }\max\{m,m'\}>n\\
      0&\text{if } n=n' \text{ and } \min\{m,m'\}>n\\
      2^{-n}&\text{if }n=n' \text{ and none of the above}\\
      2\sum_{k=\min\{n,n'\}}^{\max\{n,n'\}}k^{-2}&\text{if } n\neq n'.
    \end{cases}
  \]
Since $d((n,m),(n,m)) = 0$ and the distance function is symmetric, we only need to check the
  triangle inequality.
  Let $(n,m),(n',m'),(n'',m'')$ be points in our space. If $n\neq n''$, then
  \begin{multline*}
    d((n,m),(n'',m''))=2\sum_{k=\min\{n,n''\}}^{\max\{n,n''\}}k^{-2}\leq
    2\sum_{k=\min\{n,n',n''\}}^{\max\{n,n',n''\}}k^{-2}\\
    \leq  2\sum_{k=\min\{n,n'\}}^{\max\{n,n'\}}k^{-2} +2\sum_{k=\min\{n',n''\}}^{\max\{n',n''\}}k^{-2}
    =d((n,m),(n',m'))+d((n',m'),(n'',m'')).
  \end{multline*}
  If, however $n=n''$, we may assume that $m\neq m''$ and $\min\{m,m''\}\leq n$ and
  $(\min\{m,m''\}\neq 1$ or $\max\{m,m''\}\leq n)$ as otherwise the triangle inequality is trivially satisfied.
  If $n'\neq n$, then 
  \[d((n,m),(n'',m'')) = 2^{-n} \leq 2 n^{-2} \leq
  2\sum_{k=\min\{n,n'\}}^{\max\{n,n'\}}k^{-2}\leq d((n,m),(n',m')).\]
  When $n'=n$ then $m'\neq m$ or $m'\neq m''$. Therefore, at least one of $d((n,m),(n',m'))$ and
  $d((n',m'),(n'',m''))$ is equal to $2^{-n}$. Again we obtain
  \[ d((n,m),(n'',m''))= 2^{-n} \leq d((n,m),(n',m'))+d((n',m'),(n'',m''))\]
  and $d$ is a pseudometric. In fact, identifying points with distance zero gives the metric space
  $((\N\times\N)/\hspace{-.2em}\approx, d)$ where all $(n,m)$ get identified with $(n,1)$ for $m>n$.
  
  To see that this space has infinite Assouad dimension one can consider balls $B_n$ centered at $(n,1)$
  with diameter $R=2^{-n}$. This ball contains exactly $n$ distinct points $(n,1),(n,2),\dots,(n,n)$ each
  at distance $R$ from each other. Letting $r=R/2$ we need $n$ balls of diameter $r$ to cover $B_n$.
  However, there are no uniform constants $C,s$ such that $n\leq C (2)^s$ and hence the Assouad
  dimension is infinite.
  
  To show that this metric space is not starry, we assume for a contradiction that it is and that
  there exist $1<A$, $\eta<(A-1)/2$ and subsets $S_n$ that form $(A_n,\eta)$-approximate
  $n$-stars for $A<A_n $.
  Consider $S_n$. It must contain a centre $x_0=(p_0,q_0)\in S_n$ and $n$ distinct points in the
  annulus $D_n=B(x_0,(1+\eta)\rho_n)\setminus B(x_0,\rho_n)$. We see that $(1+\eta)\rho_n > 2^{-n}$ since
  otherwise $S_n\subseteq B_n$ and all points in $B_n$ are equidistant and do not have a centre.
  We split the annulus $D_n$ in two parts, 
  \[
    D_n^- = \{(p,q)\in\N : p<p_0 \text{ and }
  \rho_n \leq \sum_{k=p}^{p_0}k^{-2}\leq (1+\eta)\rho_n \}
\]
and  
\[
  D_n^+ = \{(p,q)\in\N : p>p_0 \text{ and }
  \rho_n \leq \sum_{k=p_0}^{p}k^{-2}\leq (1+\eta)\rho_n \}
\]
Let $(p,q),(p',q')\in D_n^+$ be distinct. Then, 
\[
  d((p,q),(p',q'))\leq
  \max\{2^{-(n+1)},\sum_{k=p_0}^{p}k^{-2}\}\leq (1+\eta)\rho_n < (A-\eta)\rho_n \leq
  (A_n-\eta)\rho_n.
\]
Thus, $S_n$ may contain at most one element in $D_n^+$ and $D_n^-$ contains at least $n-1$ elements.
Consider the distinct elements $(p,q),(p',q')\in D_n^-$. If $p\neq p'$ we must have
$d((p,q),(p',q')) \leq \sum_{k=p_0}^{p}k^{-2} \leq(1+\eta)\rho_n < (A_n-\eta)\rho_n$. This implies
that at least $n-1$ elements in $D_n^-$ are contained a single $B_p\subset D_n^-$. Considering
distinct elements $(p,q),(p',q')\in D_n^-$ with
$p=p'$, we obtain $d((p,q),(p',q'))=2^{-p}$, where $p$ satisfies $\sum_{k=p}^{p_0}k^{-2}\leq
(1+\eta)\rho_n$. Since these points form an approximate $n$-star, we further have
$2^{-p}>(A_n-\eta)\rho_n>(1+\eta)\rho_n$ and so
\[
  p^{-2}\leq \sum_{k=p}^{p_0}k^{-2} \leq(1+\eta)\rho_n < 2^{-p}.
\]
This implies $p\leq 4$ and so $\# S_n \leq 5$, a contradiction as $n$ was arbitrary.

  Lastly, the space is bounded as the entire set is contained in $B((1,1),2\pi^2/6)$.
\end{proof}

\section{Result for Brownian Continuum Random Trees}
\label{sect:CRT}

In this section we introduce the concept of $\R$-trees. We define a pseudometric on $[0,1]$ 
in terms of an excursion function $f$. This pseudometric gives
rise to an $\R$-tree which we call the continuum tree of function $f$. Letting $f$ be a generic
realisation of the Brownian excursion, we obtain the Brownian continuum random tree (CRT).
We will show that the CRT is a starry metric space and thus has infinite Assouad dimension.
In Section \ref{sect:dualDiscussion} we ask whether this is true for a larger class of functions.

\subsection{\texorpdfstring{$\R$}{R}-trees and excursion functions}
\label{sect:Excursions}
An {$\R$-tree} is a continuum variant of a tree. It is a metric space that satisfies the following
properties.
\begin{definition}
  A metric space $(X,d)$ is an \textbf{$\R$-tree} if, for every $x,y\in X$,
  \begin{enumerate}
    \item there exists a unique isometric mapping $f_{(x,y)}\from [0, d(x,y)]\to X$ such that
      $f_{(x,y)}(0)=x$ and $f_{(x,y)}(d(x,y)) = y$,
    \item if $f\from [0,1]\to X$ is injective with $f(0)=x$ and $f(1)=y$, then
      \[f([0,1]) = f_{(x,y)}([0,d(x,y)]).\]
   \end{enumerate}
   We further say that $(X,d)$ is \textbf{rooted} if there is a distinguished point $x\in X$, which we call
   the root.
\end{definition}
Heuristically, $X$ is a connected, but potentially uncountable, union of line segments. Every two points $x,y\in X$ are connected
by a unique arc (or geodesic) that is isomorphic to a line segment. We call any $x\in X$ a leaf if
$X\setminus\{x\}$ is still connected. In Section~\ref{sect:BrownianMap} we will further introduce a
labelling on the trees that is used to identify (or glue) certain leaves. The resulting space will be
the Brownian map.

There are several canonical ways of generating $\R$-trees, such as the "stick-breaking model" but
here we will focus on continuum trees generated by excursion functions.
\begin{definition}
  Let $f\from[0,1]\to[0,\infty)$ be a continuous function. We say that $f$ is a 
  \textit{\textbf{(length $1$) excursion function}} if $f(0)=f(1)=0$ and $f(t)>0$ for all $t\in(0,1)$. 
\end{definition}
These excursion functions are now used to define a new metric on $[0,1]$ that gives a new metric
space called the continuum tree.
\begin{definition}\label{def:contTree}
  Let $f$ be an excursion function. We call $T_f=([0,1]/\hspace{-.2em}\approx,d_f)$ the
  \textbf{continuum tree} with excursion $f$,
  where $d_f$ is the (pseudo-)metric given by
  \[
    d_f(x,y)=f(x) + f(y) -2 \min\{f(t) \;:\; \min\{x,y\} \leq t \leq \max\{x,y\}\,\},
  \]
  and $\approx_f$, the equivalence relation on $[0,1]$ defined by $x\approx_f y$ if and only if $d_f(x,y)=0$.
\end{definition}
The resulting metric space is of cardinality the continuum and can be considered a tree with root
vertex $0\approx_f 1$, where the lowest common ancestor of the equivalence classes of $x < y$ is given
by the equivalence classes of any $t_0\in[x,y]$ such that $f(t_0)=\min\{f(t) \;:\; x\leq t \leq y\}$.
We note that while the value for which the minimum is achieved might not be unique in $[0,1]$ with
the Euclidean metric, all such points are identified in the metric space
$T_f=([0,1]/\hspace{-.2em}\approx_f ,d_f)$, where $d_f$ is a bona fide metric.

\subsection{The Brownian Continuum Random Tree}
\label{sect:CRTStarry}
The Brownian continuum random tree was first studied in the comprehensive work of Aldous \cite{Aldous1, Aldous2, Aldous3}.
It is a random metric space that can be obtained with the continuum tree metric described above by
choosing the normalised Brownian excursion $\ex(t)$ as the excursion function.
The Brownian excursion can be defined in terms of a Brownian bridge $B(t)$ with parameter $T$, which is a Wiener process
conditioned on $B(0)=B(1)=0$. Further, it is well known that the Brownian bridge (with $T=1$) can be expressed as
$B(t) = W(t) - t W(1)$ where $B(t)$ is independent of $W(1)$. The graph $\{(t,W(t)) : t\in[0,1]\}$
and hence $\{(t,B(t)) : t\in[0,1]\}$ are compact almost surely and so
$B(t_{\min})=\min\{B(t)\;:\;0\leq t\leq 1\}$ exists and $t_{\min}$ is almost surely unique.
Cutting the Brownian bridge at the minimum and translating, one obtains a Brownian excursion
\[
  \ex(t) = \begin{cases}
    B(t+t_{\min})-B(t_{\min}) & 0 \leq t \leq 1-t_{\min},\\
    B(t-1+t_{\min})-B(t_{\min}) & 1-t_{\min} < t \leq 1.
  \end{cases}
\]
We use this definition of the Brownian bridge and excursion, as we will need to show a decay of
correlations between $\ex(s)$ and $\ex(t)$, where $s$ and $t$ are in disjoint subintervals of
$[0,1]$. 
\begin{definition}
  Let $\ex$ be a Brownian excursion. The random metric space $(T_{\ex},d_{\ex}) =
  ([0,1]/\hspace{-.2em}\approx_{\ex},d_{\ex})$ is called
  the \textbf{Brownian continuum random tree (CRT)}.
\end{definition}
The CRT also appears as the limit object of the stick-breaking model and rescaled critical
Galton-Watson trees as the number of nodes is taken to infinity. As such, the CRT is an important
object in probability theory. It also appears in the construction of the Brownian map, which we will
recall in Section~\ref{sect:BrownianMap}.

Our main result in this section is that the CRT is starry. However, we establish a slightly stronger
result below, which we will need in the proof that the Brownian map is starry.
\begin{theorem}
  \label{thm:CRTStarry}
  Let $\ex$ be a Brownian excursion and $T_{\ex}$ be the associated Brownian continuum random tree.
  Then, for every $n$, $T_{\ex}$ contains infinitely many approximate $n$-stars, almost surely. In
  particular, $T_{\ex}$ is almost surely starry.
\end{theorem}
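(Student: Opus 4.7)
My plan is threefold: reduce the existence of an $(A,\eta)$-approximate $n$-star in $T_\ex$ to a combinatorial pattern of the excursion function $\ex$, show this pattern occurs with positive probability on any fixed subinterval of $[0,1]$ via the full-support property of the Brownian bridge, and combine the promised decay of correlations of $\ex$ across disjoint subintervals with a Borel--Cantelli argument to obtain infinitely many stars almost surely.

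For the reduction, fix $n$ and $\eta \in (0,1)$ and set $A = 2(1+\eta)$, which satisfies $\eta < (A-1)/2$. I would call a tuple $(t_0 < t_1 < \dots < t_n;\,h;\,\varrho)$ a \emph{star tuple} if $\ex(t_0) = h$, $\ex(t_i) - h \in [(1+\eta/2)\varrho,(1+\eta)\varrho]$ for each $i \in \{1, \dots, n\}$, and $\min_{[t_i, t_{i+1}]} \ex = h$ for each consecutive pair. Since $\ex \ge h$ on each interval $[t_i, t_{i+1}]$ (as the min there equals $h$), the minimum condition propagates to $\min_{[t_i, t_j]} \ex = h$ for all $i \neq j$. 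The metric formula $d_\ex([s],[t]) = \ex(s) + \ex(t) - 2 \min_{[s,t]} \ex$ then yields $d_\ex([t_0], [t_i]) \in [(1+\eta/2)\varrho,(1+\eta)\varrho] \subseteq [\varrho, (1+\eta)\varrho]$ and $d_\ex([t_i], [t_j]) \in [(2+\eta)\varrho, (2+2\eta)\varrho] = [(A-\eta)\varrho, A\varrho]$, producing an $(A,\eta)$-approximate $n$-star with scale $\varrho$ in $T_\ex$.

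To produce infinitely many star tuples, I would partition $[1/4, 3/4]$ into $N$ disjoint subintervals $I_k = [a_k, b_k]$. On the probability-one event $\{t_{\min} \notin [1/4, 3/4]\}$, the circular-shift construction $\ex(t) = B(t+t_{\min}) - B(t_{\min})$ expresses each $\ex|_{I_k}$ as a Brownian bridge segment shifted by the common constant $-B(t_{\min})$; hence, conditional on its endpoint values, $\ex|_{I_k}$ has fully supported law in $C(I_k)$. Conditioning further on the endpoint values $\ex(a_k), \ex(b_k)$ landing in a favourable range $(u,v)$ (a positive-probability event, uniform in $k$ by translation invariance and Brownian scaling), one explicitly builds a continuous function on $I_k$ realising the star-tuple pattern strictly inside the envelope, and uniform approximation then gives $\Prob(F_k \mid \text{favourable endpoints}) \ge p > 0$ uniformly in $k$, where $F_k := \{I_k \text{ carries a star tuple}\}$. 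The stated decay of correlations on disjoint subintervals means that, conditional on all endpoint values $(\ex(a_k),\ex(b_k))_k$, the paths $(\ex|_{I_k})_k$ are independent Brownian bridges, so the events $F_k$ are conditionally independent. Combined with a lower bound on the number of endpoint pairs landing in $(u,v)^2$ simultaneously, a conditional Borel--Cantelli argument produces infinitely many star tuples almost surely as $N \to \infty$. Since $n$ and $\eta$ were arbitrary, $T_\ex$ is starry almost surely.

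The main obstacle is making the decay-of-correlations step rigorous: the Brownian excursion is not a Markov process, but the circular-shift representation reduces the conditional independence of $(\ex|_{I_k})_k$ given endpoint values to the analogous, well-known property for the Brownian bridge. Additional care is required to control the joint distribution of $(\ex(a_k), \ex(b_k))_k$ --- in particular, to verify that with probability bounded away from zero (as $N \to \infty$) sufficiently many endpoint pairs land in the favourable range $(u,v)^2$. This should follow from standard Gaussian estimates, since the endpoint values are jointly a Brownian bridge sampled on a fixed grid (plus the common shift $-B(t_{\min})$) with uniformly bounded marginal variances.
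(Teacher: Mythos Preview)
Your star-tuple reduction is correct and close in spirit to the paper's approach, but the probabilistic step contains a genuine gap. The event $\{t_{\min}\notin[1/4,3/4]\}$ is \emph{not} a probability-one event: the location of the minimum of a Brownian bridge is uniformly distributed on $[0,1]$ (this is part of Vervaat's theorem), so the event has probability $1/2$. Moreover, even after restricting to it you cannot simply invoke the Markov property of $B$ to obtain conditional independence of the pieces, because conditioning on $\{t_{\min}\notin[1/4,3/4]\}$ is conditioning on a global functional of the entire path, which alters the law of $B$ on the subintervals and in general destroys the bridge-conditional-independence you rely on. The finite-$N$ partition followed by ``$N\to\infty$'' is also not a Borel--Cantelli argument as stated; you would need a fixed infinite family of intervals from the outset.

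The paper sidesteps all of this by establishing independence at the level of the Wiener process $W$, where increments over a fixed countable family of disjoint intervals $A_n^k\subset[1/2,3/4)$ are genuinely independent. Using Brownian scaling and the support theorem, the rescaled process on each $A_n^k$ approximates an explicit piecewise-linear zig-zag $F_n$ with probability $p_n>0$, independently in $k$, so Borel--Cantelli gives infinitely many good $k$ for each $n$ almost surely. The passage $W\to B=W(\cdot)-(\cdot)W(1)$ is then carried out \emph{pathwise} for a fixed realisation: the correction on $A_n^k$ has size $O(\sqrt{|A_n^k|}\,|W(1)|)\to 0$, so the same approximation holds for $B$ with a slightly worse constant. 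Finally the passage $B\to\ex$ is just the circular shift, handled by a two-case split on the position of $t_{\min}$ (discarding finitely many small $n$ when $t_{\min}<3/4$). This decouples the independence argument from $t_{\min}$ entirely and avoids any endpoint-conditioning.
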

Therefore, by Theorem~\ref{thm:main} and Corollary~\ref{thm:mainCor}, the Brownian excursion has
infinite Assouad dimension and cannot be embedded into finite dimensional manifolds using 
quasisymmetric mappings.

We end this section by noting that, from a fractal geometry standpoint,
the continuum tree metric could hold the key to a better understanding of ``fractal functions'' such
as the Weierstra\ss\ functions and general self-affine functions, see Section~\ref{sect:dualDiscussion}.

\subsection{Proofs of Section~\ref{sect:CRT}}
To prove that $T_{\ex}$, and later that the Brownian map, is starry, we partition $[1/2,3/4)$ into
countably many disjoint intervals $A_n^m$ and show that the processes on these intervals are
``almost'' independent.
Let $A_n^k=[a(n,k),a(n,k)+2^{-(n+k+2)}]$, where $a(n,k)=\tfrac34-2^{-(n+k+1)}(2^{k-1}+1)$.
Noting that $a(n,k+1)-a(n,k)=2^{-(n+k+2)}$ and $a(n+1,1)>a(n,k)$ for all $n,k\in\N$ we see that the
interiors of $A_n^k$ and $A_{n'}^{k'}$ are disjoint whenever $n\neq n'$ or $k'\neq k$.
Further, $\bigcup_{n,k\in\N}A_n^k=[\tfrac12,\tfrac34)$ and the half-open intervals 
$[\inf A_n^k, \sup A_n^k)$ form a countable partition of
$[\tfrac12,\tfrac34)$.
We require the following well-known result that allows us to estimate a Wiener process with a given
function. In particular, this follows directly from the construction of the classical Wiener
measure on the classical Wiener space and the fact that this measure is strictly
positive, see e.g.~\cite[\S8]{StroockBook}.

\begin{lemma}\label{thm:approximation}
Let $W(t)$ be a Wiener process on $[0,1]$. Then, for every continuous function $f\in C[0,1]$ and
$\eps>0$, 
\[\Prob(\|W(t)+f(0)-f(t)\|_\infty < \eps) >0.\]
\end{lemma}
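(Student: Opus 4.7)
The statement is essentially the support theorem for Wiener measure on $C_0[0,1]$ (continuous paths vanishing at $0$ equipped with the supremum norm). First I would set $g(t)=f(t)-f(0)$ so that $g$ is continuous on $[0,1]$ with $g(0)=0=W(0)$; the claim becomes $\Prob(\|W-g\|_\infty<\eps)>0$, that is, every uniform $\eps$-neighbourhood of a continuous path through the origin is charged by Wiener measure.

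My strategy is piecewise linear approximation combined with the independent increments of $W$ and a small-ball estimate. By uniform continuity, choose a partition $0=t_0<t_1<\dots<t_N=1$ fine enough that $g$ oscillates by less than $\eps/3$ on each $[t_i,t_{i+1}]$, and let $L$ be the piecewise linear interpolant of $g$ at the $t_i$, so that $\|g-L\|_\infty<\eps/3$. It then suffices to prove $\Prob(\|W-L\|_\infty<2\eps/3)>0$, and this probability is bounded below by that of the intersection
\[
\bigl\{|W(t_i)-L(t_i)|<\tfrac{\eps}{3},\ i=1,\dots,N\bigr\}\cap
\bigcap_{i=0}^{N-1}\bigl\{\sup_{t\in[t_i,t_{i+1}]}|(W(t)-W(t_i))-(L(t)-L(t_i))|<\tfrac{\eps}{3}\bigr\}.
\]
The first event has strictly positive probability because $(W(t_1),\dots,W(t_N))$ has a non-degenerate Gaussian density on $\R^N$ that is everywhere positive. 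Conditional on the values $W(t_i)$, the processes on the subintervals are independent (affine shifts of) Brownian bridges, and each has positive probability to remain within an $\eps/3$-tube around the relevant affine segment by the classical small-ball estimate for Brownian motion or Brownian bridge on $[0,t_{i+1}-t_i]$.

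The main obstacle is to justify the conditional independence of the subinterval fluctuations carefully, so that the joint lower bound factorises across $i$ and each factor is strictly positive. This is standard but must be done via the Markov property or equivalently by decomposing $W(t)=\widetilde W(t)+L(t)$ and using the independent-increments structure. Alternatively, one may simply appeal to the support theorem for Wiener measure in the form referenced by the author (Stroock, \S8), which packages exactly this positivity of open cylindrical neighbourhoods of any continuous path starting at $0$, and thereby conclude the lemma immediately.
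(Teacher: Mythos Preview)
The paper does not actually prove this lemma: it states the result and dispatches it in one sentence by citing the strict positivity of Wiener measure on $C_0[0,1]$ from Stroock, \S8. Your closing alternative (``one may simply appeal to the support theorem \dots'') is therefore exactly the paper's approach.

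Your primary route---piecewise linear approximation plus conditional independence of the Brownian bridges between grid points---is a genuinely more elementary and self-contained argument than the bare citation. The idea is sound, but the bookkeeping as written has a small leak: on the event $A=\{|W(t_i)-L(t_i)|<\eps/3\text{ for all }i\}$, the endpoint discrepancy $|(W(t_{i+1})-L(t_{i+1}))-(W(t_i)-L(t_i))|$ can be as large as $2\eps/3$, so the conditioned bridge may already violate the $\eps/3$-tube $B_i$ at $t=t_{i+1}$, making $\Prob(B_i\mid A)$ possibly zero for some endpoint configurations. This is easily repaired by shrinking the tolerance in $A$ (e.g.\ to $\eps/6$) or, more cleanly, by dropping $A$ altogether and using only the independent events $B_i$ with tolerance $\eps/(3N)$ so that the grid-point errors cannot accumulate beyond $\eps/3$. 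Either way your strategy goes through; the paper simply avoids all of this by invoking the support theorem directly.
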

We will use this lemma with a ``zig-zag'' function (see Figure~\ref{fig:approximation}) that will
give the correct structure on $[0,1]$
when applied with the continuum tree metric.
Let $F_n(x):[0,1]\to \R_{\geq 0}$, where
\[
  F_n(x) = \begin{cases}
    2nx, & 0\leq x < 1/(2n);\\
    -nx+\frac{2k+3}{2}, & \frac{2k+1}{2n}\leq x < \frac{k+1}{n} \text{ and }0\leq k \leq n-2;\\
    nx-k-\tfrac12, & \frac{k+1}{n}\leq x < \frac{2k+3}{2n}\text{ and }0\leq k \leq n-2;\\
    -2nx+2n, & \frac{2n-1}{2n}\leq x \leq 1.
  \end{cases}
\]	
The function $F_n$ is an excursion function ($F_n(0)=0=F_n(1)$) that is continuous and linear
between the local maxima $F_n((k+1)/n)=1$
and local minima $F_n((2k+1)/(2n))=1/2$ in $(0,1)$.
We next show that the Brownian excursion contains arbitrarily good approximations to this function for all $n$.

\begin{lemma}\label{thm:excursionApprox}
  Let $\ex(s)$ be a Brownian excursion. Then, almost surely, for every
  $n$ large enough, there exists infinitely many pairwise disjoint non-trivial 
  intervals $[s_{n}^k, t_n^k]\subset [0,1]$, $k\in\N$, such that
  \[\sup_{s\in[s_n^k,t_n^k]}\lvert \ex(s)-\ex(s_n^k) - \sqrt{t_n^k-s_n^k}\cdot F_n(s) \rvert <
  \sqrt{t_n^k-s_n^k}\cdot 2^{1-n}.\]
\end{lemma}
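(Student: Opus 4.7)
The plan is to apply Lemma~\ref{thm:approximation} separately on each of the pairwise-disjoint intervals $A_n^k$, combine the resulting independent events via the second Borel--Cantelli lemma to obtain infinitely many ``good'' subintervals of $[1/2,3/4)$ for the underlying Wiener process, and then transfer this information from the Wiener process to the Brownian bridge (paying a small drift correction) and finally to the Brownian excursion (via the cyclic shift at the minimum).

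Concretely, fix $n$, set $\ell_{n,k} := |A_n^k| = 2^{-(n+k+2)}$ and, by Brownian scaling, consider the rescaled Wiener process $\tilde{W}_k(u) := \ell_{n,k}^{-1/2}\bigl(W(a(n,k)+\ell_{n,k}u) - W(a(n,k))\bigr)$ on $[0,1]$. Apply Lemma~\ref{thm:approximation} with $f = F_n$ and $\eps = 2^{-n-1}$ to obtain $p_n := \Prob(\|\tilde{W}_k - F_n\|_\infty < 2^{-n-1}) > 0$, independent of $k$. Crucially, since the $A_n^k$ are pairwise disjoint for fixed $n$, the events $G_k := \{\|\tilde{W}_k - F_n\|_\infty < 2^{-n-1}\}$ depend on disjoint increments of $W$ and are therefore \emph{independent}. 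As $\sum_k \Prob(G_k) = \infty$, the second Borel--Cantelli lemma yields that almost surely infinitely many $G_k$ occur.

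Next, transfer to the Brownian bridge $B(t) = W(t) - tW(1)$. For $s \in A_n^k$,
\[
  B(s) - B(a(n,k)) = \bigl(W(s) - W(a(n,k))\bigr) - (s - a(n,k))W(1),
\]
so on $G_k$ the bridge-increment differs from $\sqrt{\ell_{n,k}}\,F_n(\cdot)$ by at most $\sqrt{\ell_{n,k}}\cdot 2^{-n-1} + \ell_{n,k}|W(1)|$. Because $|W(1)| < \infty$ almost surely, all but finitely many $k$ satisfy $\sqrt{\ell_{n,k}}|W(1)| < 2^{-n-1}$, and for such $k$ the total error is bounded above by $\sqrt{\ell_{n,k}}\cdot 2^{1-n}$, as required.

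The final step is to push the statement from the bridge's domain to the excursion's domain via the cyclic shift at the a.s.-unique minimum time $t_{\min}$. Since the intervals $\{A_n^k\}_k$ are pairwise disjoint, $t_{\min}$ lies in at most one of them; for each remaining $k$ the map $s \mapsto (s-t_{\min}) \bmod 1$ carries $A_n^k$ bijectively onto an interval $[s_n^k, t_n^k] \subset [0,1]$ of the same length, and the piecewise definition of $\ex$ yields $\ex(\tilde{s}) - \ex(s_n^k) = B(s) - B(a(n,k))$ for the corresponding points. Cyclic shifts preserve pairwise disjointness, so the resulting intervals remain disjoint. A countable intersection over $n$ of these a.s.\ events gives the full conclusion. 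The main technical obstacle is precisely this transfer step: one must carefully match up the two branches of the piecewise definition of $\ex$ and verify that the good intervals in the bridge's domain push forward to genuine pairwise disjoint subintervals of $[0,1]$ in the excursion's domain.
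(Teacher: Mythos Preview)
Your proposal is correct and follows essentially the same strategy as the paper's proof: Brownian scaling on the $A_n^k$ plus Lemma~\ref{thm:approximation}, independence from disjoint increments and Borel--Cantelli, then a drift correction to pass to the bridge, and finally a cyclic shift to pass to the excursion. The only cosmetic differences are that the paper takes $\eps=2^{-n}$ (rather than $2^{-n-1}$) in the Wiener approximation and handles the excursion transfer by splitting into the cases $t_{\min}\geq 3/4$ and $t_{\min}<3/4$ (restricting to large $n$ in the latter so that all $A_n^k$ lie to the right of $t_{\min}$), whereas you simply discard the at most one $A_n^k$ containing $t_{\min}$; both are valid.
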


\begin{proof}
  Using Lemma~\ref{thm:approximation}, we have $\|W(t)-F_n(t)\|_\infty<2^{-n}$ with positive
  probability, see Figure~\ref{fig:approximation}.
\begin{figure}[htb]
  \includegraphics[width=0.7\textwidth]{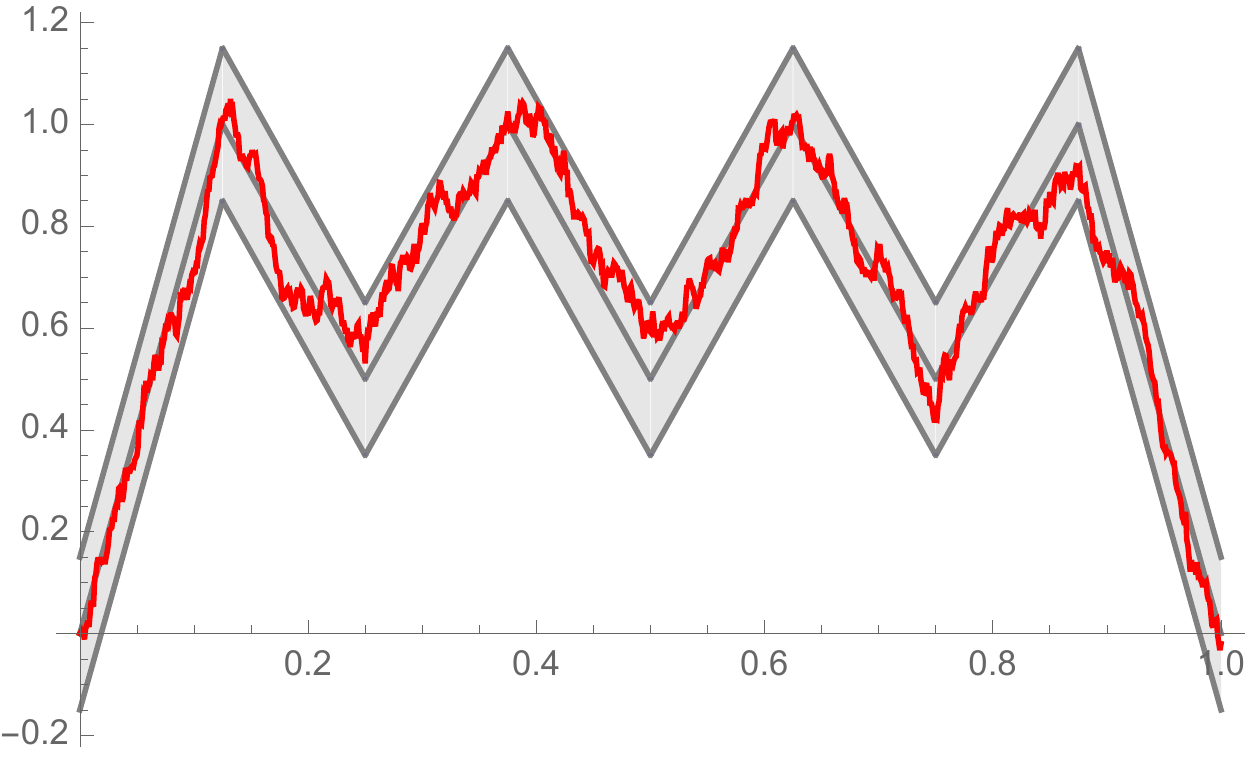}
  \caption{The function $F_4$ and a Wiener process such that $\|W-F_4\|_\infty < 1/8$.}
  \label{fig:approximation}
\end{figure}
Let $H_n^k:[0,1]\to A_n^k$ be the map defined by
$x\mapsto \lvert A_n^k\rvert x+\inf A_n^k$.
By the scaling property of Wiener processes, 
$W_n^k(s)=(W\circ H_n^k(s) - W\circ H_n^k(0))/\sqrt{\lvert A_n^k\rvert}$ is equal in
distribution to $W(s)$. Therefore, $\Prob \{\lVert W_n^k - F_n\rVert_\infty< 2^{-n}\}\geq p_n>0$,
where $p_n$ only depends on $n$. 
Further, the disjointness of intervals $A_n^k$ and $A_{n'}^{k'}$ for
$(n,k)\neq(n',k')$ implies that the events are independent for different $(n,k)$.
A simple application of the Borel-Cantelli lemma then shows that for every $n$ there are
infinitely many $k$ such that $\lVert W_n^k - F_n\rVert_\infty< 2^{-n}$ almost surely.

  Let us define the Brownian bridge $B$ by $B(s) = W(s)-sW(1)$ and let
\begin{align*}
  B_n^k(s) &= (B\circ H_n^k(s)-B\circ H_n^k(0))/\sqrt{\lvert A_n^k\rvert}\\ &= (W\circ H_n^k(s) - s W(1)\cdot
H_n^k(s)-W\circ H_n^k(0) + s W(1)\cdot
H_n^k(0))/\sqrt{\lvert A_n^k\rvert}.
\end{align*}
While it is not true that
$B_n^k(s)$ is independent of $B_{n'}^{k'}(t)$, the problematic $W(1)$ plays a diminishing role in
approximating $B_n^k$ by $F_n^k$ for $n,k$ large. 

Fix a realisation such that for
all $n$ there are infinitely many $k$ such that  $\lVert W_n^k - F_n\rVert<2^{-n}$.
Clearly this realisation is generic as the event has full measure.
Then, for this realisation,
\begin{align*}
  &\lVert B_n^k  - F_n\rVert_\infty\\=& \left\lVert \left(W\circ H_n^k -s\cdot  W(1)\cdot H_n^k-W\circ
  H_n^k(0)+s\cdot W(1)\cdot H_n^k(0)\right)/\sqrt{\lvert A_n^k\rvert}-F_n\right\rVert_\infty\\
  \leq&\left\lVert W_n^k - F_n\right\lVert_\infty + \left\lVert s\cdot W(1)\cdot(H_n^k(0) -
  H_n^k)\right\rVert_\infty / \sqrt{\lvert A_n^k\rvert}\\
  \leq& \;2^{-n} +\sqrt{\lvert A_n^k\rvert}\cdot \lvert W(1)\rvert
  = 2^{-n}+2^{-(n+k)/2-1}\lvert W(1)\rvert.
\end{align*}
Since $W(1)$ is fixed for this realisation  we obtain that
for every $n\in\N$ there are infinitely many $k\in\N$ such that $\lVert B_n^k - F_n\rVert <
2^{-(n-1)}$.
Finally, $\ex$ is a simple cut and translate transformation of $B$, where the cut is the
(almost surely) unique $t_{\min}$ with $B(t_{\min})=\min_{t\in[0,1]} B(t)$.
There are two cases to consider. Either $t_{\min} \geq 3/4$ or $t_{\min}<3/4$.
In the former case we define the interval $I_n^k=A_n^k+1-t_{\min}$ and, in the latter case, we define $I_n^k =
A_n^k-t_{\min}$ for $n$ large enough such that $\inf A_n^1 > t_{\min}$. Analogous to $H_n^k$, define
$G_n^k:[0,1]\to I_n^k$ to be the unique orientation preserving similarity mapping $[0,1)$ into
$I_n^k$.
Therefore, almost surely, for large enough $n$ there exist infinitely many $k$ such that
$\sup_{s\in [0,1)}\lvert (\ex\circ G_n^k(s)-\ex\circ G_n^k(0))/\sqrt{\lvert I_n^k\rvert} - F_n(s)\rvert<2^{1-n}$.
This is the required conclusion.
\end{proof}

Equipped with this lemma, we prove that the CRT is starry.

\begin{proof}[Proof of Theorem~\ref{thm:CRTStarry}]
Let $G_n^k$ be as in the proof of Lemma~\ref{thm:excursionApprox} and assume that $\ex$ is a generic
realisation. Then, 
\begin{equation}\label{eq:excursionEstimate}
  \ex\circ G_n^k(0)+\sqrt{\lvert I_n^k\rvert}\cdot(F_n(s)-2^{1-n})
  \leq \ex\circ G_n^k(s) \leq \ex\circ G_n^k(0)+\sqrt{\lvert I_n^k\rvert}\cdot(F_n(s)+2^{1-n})
\end{equation}
for all $0\leq s \leq 1$, where the existence of $k$ for large enough $n$ is guaranteed by
Lemma~\ref{thm:excursionApprox}.
Let $z = 1/(4n)$. Further, let $y_p = (2p+1)/2n$ for $0\leq p \leq n-2$. 
We now estimate the distances between $G_n^k(z)$ and $G_n^k(y_p)$ with respect to the $d_{\ex}$ metric.
First, by \ref{eq:excursionEstimate}, 
\begin{align*}
  d_{\ex}(G_n^k(z),G_n^k(y_p)) &= \ex(G_n^k(z))+\ex(G_n^k(y_p)) - 2 \min_{t\in [z,y_p]}\ex(G_n^k(t))\\\
  &\leq \sqrt{\lvert I_n^k\rvert} \left(F_n(z)+F_n(y_p) +2\cdot 2^{1-n} - 2 \min_{t\in [z,1-z]}
  (F_n(t)-2^{1-n})\right)\\
  & = \sqrt{\lvert I_n^k\rvert} (2^{3-n}+1/2+1-2\cdot 1/2) = \sqrt{\lvert I_n^k\rvert}
  \left(  \frac{1}{2}+2^{3-n}\right)
\end{align*}
and
\begin{align*}
  d_{\ex}(G_n^k(z),G_n^k(y_p)) 
  &\geq \sqrt{\lvert I_n^k\rvert} \left(F_n(z)+F_n(y_p) -2\cdot 2^{1-n} - 2 \min_{t\in [z,1-z]}
  (F_n(t)+2^{1-n})\right)\\
  & = \sqrt{\lvert I_n^k\rvert} (-2^{3-n}+1/2+1-2\cdot 1/2) = \sqrt{\lvert I_n^k\rvert}
  \left(  \frac{1}{2}-2^{3-n}\right).
\end{align*}
Similarly, for $q\neq p$,
\begin{align*}
  d_{\ex}(G_n^k(y_q),G_n^k(y_p)) &= \ex(G_n^k(y_q))+\ex(G_n^k(y_p)) - 2 \min_{t\in [y_q,y_p]}\ex(G_n^k(t))\\\
  &\leq \sqrt{\lvert I_n^k\rvert} \left(F_n(y_q)+F_n(y_p) +2\cdot 2^{1-n} - 2 \min_{t\in [z,1-z]}
  (F_n(t)-2^{1-n})\right)\\
  & = \sqrt{\lvert I_n^k\rvert} (2^{3-n}+1+1-2\cdot 1/2) = \sqrt{\lvert I_n^k\rvert}
  \left(  1+2^{3-n}\right)
\end{align*}
and
\begin{equation*}
  d_{\ex}(G_n^k(y_q),G_n^k(y_p))\geq \sqrt{\lvert I_n^k\rvert}\left(  1-2^{3-n}\right).
\end{equation*}
Let $\varrho_n = (1-2^{4-n}) \tfrac12 \sqrt{\lvert I_n^k\rvert}$, $A_n = (2+2^{4-n})/(1-2^{4-n})$,
and $\eta_n =
(1+2^{4-n})/(1-2^{4-n})-1$. For $n\geq6$ we obtain $A_n > 2$, $0<\eta_n\leq 2/3$, and 
\[
  A_n-\eta_n = (2-2^{4-n})/(1-2^{4-n})
> (1+2^{4-n})/(1-2^{4-n}) = 1+\eta_n.
\]
Therefore the points $x_0=G_n^k(z)$, $x_p=G_n^k(y_p)$ $(1\leq p
\leq n)$ form an $(A_n,\eta)$-approximate $(n-1)$-star  for sufficiently large $n$ and so, as $A_n
>2>1$, we conclude that $T_{\ex}$ is starry almost surely.
\end{proof}

\section{The Brownian Map}
\label{sect:BrownianMap}
As mentioned in the introduction, the Brownian map is a model of random geometry of the sphere that
arises as the limit of many models of planar maps, e.g.\ uniform $p$-angulations $(p\geq 3)$ of $\bbS^2$, as the number of
vertices is increased. We refer the reader to the surveys \cite{LeGall14} and \cite{MillerSurvey}
for a detailed description of the Brownian map and its relation to other random geometries, such as
Liouville Quantum Gravity.

In this section we define the Brownian map in terms of the Brownian continuum random tree and show
that the Brownian map is almost surely starry.

\subsection{Definition of the Brownian map}
\label{sect:BMDefn}
To obtain the Brownian map from the CRT, one defines a random pseudometric on the CRT, which in turn
results in a (quotient) metric space that is the Brownian map. This metric is defined in terms of a
Gaussian process defined on the CRT.

Let $Z(t)$ be a centered Gaussian process conditioned on $\ex$ such that the covariance satisfies
\[\E(Z(s)Z(t)|\ex) = \min\{\ex(u)\;:\;\min\{s,t\}\leq u \leq \max\{s,t\}\}.\]
Note that this completely determines the Gaussian process and that the second moment of distances 
is equal to the distance on the CRT, that is $\E((Z(s)-Z(t))^2|\ex) =d_{\ex}(s,t)$.
One may imagine this process as a Wiener process along geodesics of the CRT, where branches evolve
independently up to the common joint value.

We use this process to define a pseudo metric on $[0,1]$ by first setting 
\[
  D^o(s,t) = Z(s)+Z(t) - 2 \max\left\{ \min_{u\in[s,t]}Z(u), \min_{u\in[t,s]}Z(u) \right\}
\]
for $s,t\in[0,1]$.
We can also define $D^o$ directly on the tree $T_{\ex}$ by defining 
\[
  D^o(x,y) =
  \min\{D^o(s,t)\,:\,\pi(s)=x \text{ and } \pi(t)=y\}
\]
for $x,y\in T_{\ex}$,
where $\pi:[0,1]\to T_{\ex}$ is the canonical projection under the equivalence relation
$\approx_{\ex}$, cf.~Definition~\ref{def:contTree}.
Note that $D^o$ does not satisfy the triangle inequality and for $s,t\in[0,1]$ we
further define
\[
  D(s,t)= \inf\left\{ \sum_{i=0}^{n-1} D^o (u_i,u_{i+1}) : u_i \in T_{\ex} \right\},
\]
where the infimum is taken over all finite choices of $u_i$ satisfying $\pi(s)=u_0$ and $\pi(t)=u_n$. 
It can be verified that $D$ is indeed a pseudometric on $[0,1]$. Considering
$\M=[0,1]/\hspace{-.2em}\approx$,
where $s\approx t$ if and only if $D(s,t)=0$ we obtain the Brownian map $(\M,D)$.
More generally, we can construct a Brownian map for any excursion function $f$ by replacing $\ex$
with $f$ in the construction above. We call the resulting metric space $(\Phi(f),D)$ the Brownian
map conditioned on the excursion function $f$. Letting $f=\ex$, we recover the Brownian map
$(\M,D)=(\Phi(\ex),D)$.

\subsection{The Brownian map is starry.}
Lemma~\ref{thm:excursionApprox} gives an insight in the extremal structure of $T_{\ex}$ and we
will see that it also implies extremal behaviour for the Brownian map.
The approximate star for the CRT (see Figure~\ref{fig:Tree} gives rise to a line segment of
length approximately $\varrho$ to which $n-1$ line segments of length approximately $\varrho$ are
glued near the end.

By a similar argument to the one in Lemma \ref{thm:excursionApprox} we will see that there exists
positive probability that the Gaussian process defined on this substructure follows a similar ``zig-zag''
pattern that gives rise to a further approximate $(n-1)$-star, see Figure~\ref{fig:BrownianOnTree}.
\begin{figure}
  \begin{center}
  \includegraphics[width=.45\textwidth]{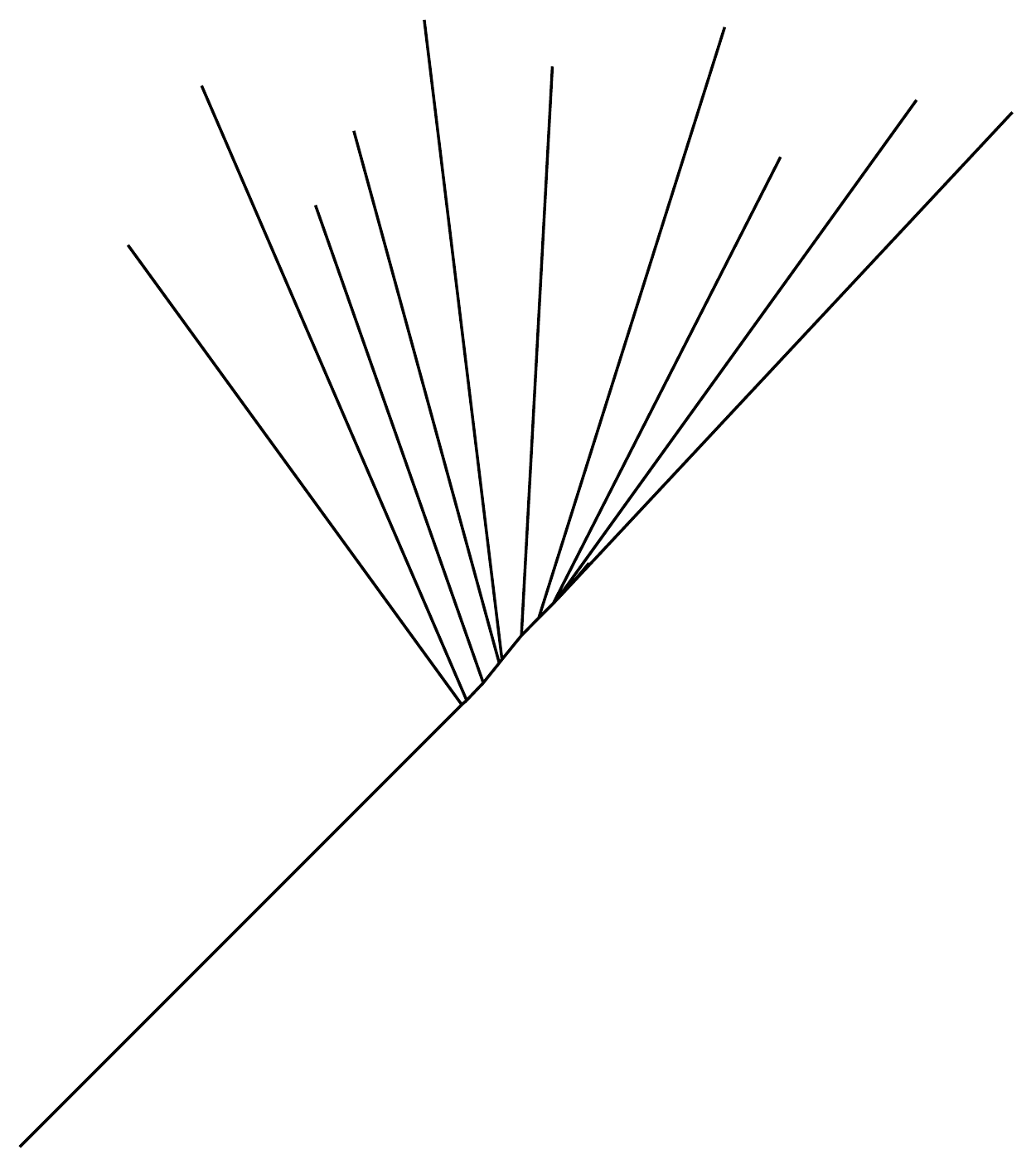}
\end{center}
\caption{The metric structure of an $F_{10}$ approximation.}
\label{fig:Tree}
\end{figure}

\begin{theorem}\label{thm:mainBM}
  Let $f:[0,1]\to \R$ be an excursion function such that for infinitely many $n\in\N$, there exists infinitely many intervals $[s_{n}^k, t_n^k]\in [0,1]$, $k\in\N$, such that
  \begin{equation}\label{eq:mapAssumptions}
    \sup_{s\in[s_n^k,t_n^k]}\lvert f(s)-f(s_n^k) - \sqrt{t_n^k-s_n^k}\cdot F_n((s-s_n^k)/(t_n^k-s_n^k)) \rvert <
  \sqrt{t_n^k-s_n^k}\cdot 2^{1-n}
  \end{equation}
  where the intervals $[s_n^k,t_n^k]$ are pairwise disjoint. Then, $(\Phi(f),D)$ is almost surely starry.
\end{theorem}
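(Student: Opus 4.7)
The plan is to mirror the strategy of Theorem~\ref{thm:CRTStarry}: on each interval $[s_n^k, t_n^k]$ where $f$ approximates $F_n$, use the randomness of the label process $Z$ (conditional on $f$) to exhibit an approximate star in $(\Phi(f), D)$, and then invoke Borel--Cantelli along the infinitely many such intervals to obtain an almost-sure statement. Writing $L = t_n^k - s_n^k$, I set $\widetilde Z(u) := L^{-1/4}(Z(s_n^k + uL) - Z(s_n^k))$ for $u \in [0,1]$. Using the covariance identity $\E(Z(s)Z(t)\mid f) = \min_{[s \wedge t, s \vee t]} f$ together with hypothesis~\eqref{eq:mapAssumptions}, one checks that $\widetilde Z$ is, up to an error uniform in $k$ and vanishing as $n \to \infty$, a centred Gaussian process on $[0,1]$ with covariance $\min_{[u \wedge v, u \vee v]} F_n$---namely the label process of the deterministic piecewise-linear tree $T_{F_n}$.

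Next comes a Cameron--Martin / Wiener-support argument analogous to Lemma~\ref{thm:approximation}: for any chosen continuous test function $G_n \colon [0,1] \to \R$ that is compatible with the tree-equivalence relation of $F_n$ and vanishes at the root, the event $E_n^k := \{\|\widetilde Z - G_n\|_\infty < 2^{-n}\}$ has probability $\Prob(E_n^k \mid f) \geq p_n > 0$ depending only on $n$. This follows by splitting $\widetilde Z$ along the $n$ branches of $T_{F_n}$---on each branch the process behaves as a Brownian motion and Lemma~\ref{thm:approximation} applies branch-by-branch. The processes on different intervals $[s_n^k, t_n^k]$ are not exactly independent, but the global contribution can be absorbed as the $W(1)$-term is in the proof of Lemma~\ref{thm:excursionApprox}, and a Borel--Cantelli argument then yields, for each fixed good $n$, infinitely many $k$'s for which $E_n^k$ occurs almost surely.

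On the event $E_n^k$, the candidate star uses the combinatorial scheme of the CRT proof: centre $x_0 = \pi(s_n^k + zL)$ at $z = 1/(4n)$, tips $x_p = \pi(s_n^k + y_p L)$ at $y_p = (2p+1)/(2n)$ for $p = 1, \dots, n-1$. The test function $G_n$ is chosen to vanish on the spine of $T_{F_n}$ and rise linearly to a height $h_p$ at the tip of the $p$-th branch. Because $G_n \geq 0$ attains $0$ at the junction on the short arc between any two of our points, while $Z$ on the complementary long arc almost surely takes much more negative values, the $\max$ of the two minima in the definition of $D^o$ is realised by the short-arc minimum. Combined with $D \leq D^o$ and $\widetilde Z \approx G_n$ this yields the upper bounds $D(x_0, x_p) \leq L^{1/4} h_p + o(L^{1/4})$ and $D(x_p, x_q) \leq L^{1/4}(h_p + h_q) + o(L^{1/4})$, and the universal inequality $D(s,t) \geq |Z(s) - Z(t)|$ supplies the matching lower bound $D(x_0, x_p) \geq L^{1/4} h_p + o(L^{1/4})$.

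The main obstacle is the matching lower bound on the tip-to-tip distances $D(x_p, x_q)$: with all $h_p$ equal, the naive $|Z(x_p) - Z(x_q)| \approx 0$ is too weak to give an $A$ uniformly bounded away from $1$. I would close the argument via a round-trip lower bound: any chain $(u_0, \dots, u_k)$ from $x_p$ to $x_q$ realising $D(x_p, x_q)$ must ``descend'' to junction height in the $Z$-coordinate because the short-arc minimum in every $D^o(u_i, u_{i+1})$ is controlled by $G_n \geq 0$, forcing a total cost of order $L^{1/4}(h_p + h_q)$; equivalently, one can invoke the fact that in the Brownian map the quotient $D$ coincides with $D^o$ at generic pairs of points (simple-geodesic property). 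Should a clean round-trip bound be unavailable, a backup is to perturb $G_n$ so that $G_n(y_p) = 1 + \delta_p$ with $\{\delta_p\}$ of comparable size but pairwise separated by a universal $c > 0$, so that $D(x_p, x_q) \geq cL^{1/4}$ directly from the universal lower bound while $D(x_0, x_p) \in [(1-o(1))L^{1/4}, (1+o(1))L^{1/4}]$. Either way, $\{x_0, x_1, \dots, x_{n-1}\}$ forms an approximate $(n-1)$-star with $A$ bounded away from $1$ uniformly in $n$; running over the infinite good set of $n$'s delivers stars of arbitrarily large order with uniform constants $A > 1 + 2\eta$, so $(\Phi(f), D)$ is starry almost surely. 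Making the round-trip lower bound rigorous is the principal technical step.
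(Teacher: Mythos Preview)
Your overall strategy matches the paper's, and you correctly isolate the tip-to-tip lower bound as the crux. The paper resolves it with a single inequality you are missing: for any $x,y$ one has $D(x,y)\geq Z(x)+Z(y)-2Z(x\wedge y)$, where $x\wedge y$ is the lowest common ancestor in the tree $T_f$ (a consequence of the cactus bound for the Brownian map). Choosing the target profile so that $Z$ stays nearly constant on the short spine and rises linearly by $\approx\sqrt{|f(L_i)|}$ along each branch $L_i$, the ancestor $y_i\wedge y_j$ is one of the branch points $x_p$, hence $Z(y_i\wedge y_j)\approx Z(x_1)$ and the inequality gives $D(y_i,y_j)\gtrsim 2\sqrt{|f(L)|}$ immediately, matching the $D^o$ upper bound and producing $A\approx 2$. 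No round-trip or chain argument is needed, and the ``$D=D^o$ at generic pairs'' fact does not help at the specific pairs $(y_i,y_j)$ you construct.

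Your backup perturbation actually fails: keeping all centre-to-tip distances in $[\varrho,(1+\eta)\varrho]$ forces the range of the $\delta_p$ to be at most about $\eta$, so any pairwise gap $c$ satisfies $c\leq\eta<1$; but the star condition demands $D(x_p,x_q)\geq(A-\eta)\varrho>\varrho$, i.e.\ $c>1$ after dividing by $L^{1/4}$. (Worse, $n-1$ values pairwise $c$-separated have range $\geq(n-2)c$, forcing $c\to 0$ as $n\to\infty$.) A smaller point: conditional on $f$, the increments of $Z$ over the disjoint intervals are \emph{exactly} independent---the covariance of increments vanishes because the $f$-minimum between two such intervals is attained outside both---so Borel--Cantelli applies directly without any $W(1)$-type correction.
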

In particular, this is true for almost every realisation of the Brownian excursion, see
Lemma~\ref{thm:excursionApprox}.
\begin{corollary}
  The Brownian map $(\M,D)$ is almost surely starry and thus for all $0<\alpha\leq 1$, doubling
  metric spaces $(X,d)$ and increasing
  functions $\Psi\from (0,\infty)\to(0,\infty)$ any homeomorphism $\phi\from (\M,D)\to
  (X,d)$ cannot satisfy
  \[
    \frac{d(\phi(x),\phi(y)}{d(\phi(x),\phi(z))} \leq \Psi\left(
    \frac{D(x,y)^{\alpha}}{D(x,z)^{\alpha}} \right)
  \]
  for all $x,y,z\in\M$.
\end{corollary}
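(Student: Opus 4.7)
The plan is to adapt the CRT argument of Theorem~\ref{thm:CRTStarry} to the Brownian map by exploiting the outer CRT star structure provided by the hypothesis on $f$ together with the independence properties of the Gaussian process $Z$. For each fixed $n$, I would use (\ref{eq:mapAssumptions}) to obtain infinitely many disjoint pattern intervals $[s_n^k, t_n^k]$ on which $f$ approximates the scaled zig-zag $F_n$, so that $T_f$ restricted to each such interval has a star substructure: a central spine of tree-length $\tfrac12\sqrt{|I_n^k|}$ terminating in a branch point $b^k$, together with $n$ branches of tree-length $\tfrac12\sqrt{|I_n^k|}$ terminating in peaks $p_1^k,\dots,p_n^k$ at physical times $s_n^k + |I_n^k|(2i-1)/(2n)$. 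A direct covariance computation, using that $f$ returns to $f(s_n^k)$ at both endpoints of the pattern interval, shows that the centered process $\tilde Z^{(n,k)}(s) := Z(s) - Z(s_n^k)$ on $[s_n^k, t_n^k]$ is conditionally independent of $Z$ outside this interval; and that conditional on $Z(b^k)$, the restrictions of $Z$ to the $n$ branches are independent Brownian motions of time length $\tfrac12\sqrt{|I_n^k|}$.

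Fixing $\eta \in (0, 1/4)$ and setting $\rho_n^k := \tfrac12|I_n^k|^{1/4}$, I would define the good event $E_{n,k}$ by requiring (i) $Z(p_i^k) - Z(b^k) \in [2\rho_n^k, (2+\eta)\rho_n^k]$ for every $i = 1,\dots,n$, and (ii) each of the $n$ branch Brownian motions lies uniformly within $\eta \rho_n^k$ of the straight line interpolating its endpoints. Both conditions place joint Gaussian restrictions on $\tilde Z^{(n,k)}$, so by Brownian scaling and the full-support property of the Gaussian measure $\Prob(E_{n,k}) \geq q_n(\eta) > 0$ uniformly in $k$. Since $E_{n,k}$ is measurable with respect to $\tilde Z^{(n,k)}$, the events across different $k$ are independent, and Borel--Cantelli will yield that almost surely infinitely many $E_{n,k}$ occur for each fixed $n$.

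On any such good event I claim that $\{b^k, p_1^k, \dots, p_n^k\}$ forms an $(A_n, \eta')$-approximate $n$-star in $(\Phi(f), D)$ with $A_n$ close to $2$ and $\eta' = O(\eta)$, at scale $\rho = 2\rho_n^k$. The upper bounds on $D$ will follow from $D \leq D^o$: under (ii) the inside minimum $\min_{[\cdot,\cdot]} Z$ lies within $O(\eta\rho_n^k)$ of $Z(b^k)$, while the outside minimum is at most $Z(0) = 0$, which (after a short case analysis on the sign of $Z(b^k)$) will yield $D^o(b^k, p_i^k) \leq (1+O(\eta))\rho$ and $D^o(p_i^k, p_j^k) \leq (2+O(\eta))\rho$. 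The lower bound $D(b^k, p_i^k) \geq \rho$ is immediate from $D \geq |Z(\cdot) - Z(\cdot)|$ combined with (i). The hardest step will be the lower bound $D(p_i^k, p_j^k) \geq (A_n - \eta')\rho$ for distinct peaks, since the trivial inequality gives only $|Z(p_i^k) - Z(p_j^k)| = O(\eta \rho_n^k)$; here I plan to invoke the \emph{cactus lower bound} $D(s,t) \geq Z(s) + Z(t) - 2\min_{u \in [s,t]_{T_f}} Z(u)$, available through the $1$-Lipschitz projection of the Brownian map onto the Brownian cactus (a classical construction of Curien, Le~Gall and Miermont). Since $p_i^k$ and $p_j^k$ lie on distinct CRT-branches, the $T_f$-geodesic between them traverses $b^k$, and (ii) ensures that the minimum of $Z$ along this geodesic is at most $Z(b^k) + O(\eta\rho_n^k)$, giving the cactus bound $\geq (2 - O(\eta))\rho$. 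Intersecting the resulting almost-sure events over $n \in \N$ then concludes that $(\Phi(f), D)$ is almost surely starry.
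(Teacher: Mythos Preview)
Your proposal is correct and follows essentially the same route as the paper's proof of Theorem~\ref{thm:mainBM} (from which the Corollary is immediate via Lemma~\ref{thm:excursionApprox} and the results of Section~\ref{sect:AssouadDim}): branch-wise independence of $Z$ on the CRT subtree, a positive-probability ``linear growth'' good event for $Z$ along each branch, Borel--Cantelli across the disjoint pattern intervals, the upper bound $D\leq D^o$, and precisely the cactus lower bound $D(x,y)\geq Z(x)+Z(y)-2Z(x\wedge y)$ for separating the tips. Two small points the paper handles explicitly and you should too: $f$ need not return \emph{exactly} to $f(s_n^k)$ at $t_n^k$, so one first passes to the sub-interval $[s_0,t_0]\subseteq[s_n^k,t_n^k]$ on which it does to obtain genuine independence across $k$; and the $F_n$-approximated subtree has not a single branch point $b^k$ but several nearby ones joined by short spine segments, on which $Z$ must also be controlled (the paper includes these ``joined pieces'' $J_i$ in its good event).
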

Note further that the intervals at which we chose to ``zoom in'' on the CRT and Brownian map are
arbitrary. We can easily make the stronger statement that there cannot be a single neighbourhood at which the
homeomorphism can be quasisymmetric, i.e.\ the map cannot even be locally quasisymmetric.

\subsection{Proofs of Section \ref{sect:BrownianMap}}
\begin{proof}[Proof of Theorem~\ref{thm:mainBM}]
  Temporarily fix $n,k$ such that \eqref{eq:mapAssumptions} holds. 
  Define 
  \begin{multline}\label{eq:x0def}
    x_0 =x_n= \argmax\Big\{\min\{f(s): 0 \leq s-s_n^k \leq (t_n^k-s_n^k)/(2n)\},\\\min\{f(s):
  (1-1/(2n))(t_n^k-s_n^k)\leq s-s_n^k \leq (t_n^k-s_n^k)\}\Big\},
\end{multline}
  \[x_p = \argmin\left\{f(s): \frac{p-1/2}{n}(t_n^k-s_n^k) \leq s-s_n^k \leq
  \frac{p+1/2}{n}(t_n^k-s_n^k)\right\}\quad\text{for}\quad 1\leq
  p\leq n-1,\]
  and 
  \[y_p = \argmax\left\{f(s): \frac{p}{n}(t_n^k-s_n^k) \leq
  s -s_n^k \leq \frac{p+1}{n}(t_n^k-s_n^k)\right\}\quad\text{for}\quad 0\leq p\leq n-1.\]
  Consider the ``subtree'' $([s_n^k,t_n^k],d_f)\subset (T_f,d_f)$ and note that it contains $n+1$ lines
  $L_i$ of length comparable to $(1/2)\sqrt{t_n^k-s_n^k}$. They are
  \[L_i = \{\max \{s\in[x_i,y_i]:f(s) = t \} \;:\; t\in [\max\{f(x_i),f(x_{i+1})\},f(y_i)]\}\]
  and we have $f(L_i) = [\max\{f(x_i),f(x_{i+1})\},f(y_i)]$, $\sup L_i \leq \inf L_j$ for $i< j$,
  $\min_{s\in L_i}f(s) = f(\inf L_i)$, and
  $f\rvert_{L_i}$ is a bijection.
  This structure can easily be seen on Figure~\ref{fig:BrownianOnTree} (left). The point $x_n=x_0$
  corresponds to the root
  of the tree, the $x_i$ correspond to the branch points of line segment $L_i$ that end at the leaf $y_i$.
  \begin{figure}
  \begin{tikzpicture}
    \draw[thick] (0,0) -- (2,2);
    \draw[->] (2.1,1.3) -- (2.0,1.9);
    \draw (2.1,1.3) node[anchor = north]{$x_1$};
    \draw (0,0) node[anchor = north east]{$x_0=x_4$};
    \draw[thick] (2,2) -- (0,4) node[anchor = south east]{$y_0$};
    \draw[thick] (2,2) -- (2.2,2.2) -- (1.3,4.8) node[anchor = south east]{$y_1$};
    \draw[->] (2.3,1.5) -- (2.2,2.15) ;
    \draw (2.2,1.5) node[anchor = north west]{$x_2$};
    \draw[->] (2.9,1.8) -- (2.55,2.25);
    \draw (2.9,1.8) node[anchor = north west]{$x_3$};
    \draw[thick] (2.2,2.2) -- (2.5,2.3) -- (3.2,5.2) node[anchor = south west]{$y_2$};
    \draw[thick] (2.5,2.3) -- (5.0,4) node[anchor= south east]{$y_3$};
    \draw (.8,2.8) node {$L_0$};
    \draw (1.6,3.2) node {$L_1$};
    \draw (2.5,3.7) node {$L_2$};
    \draw (4,3) node {$L_3$};
    \draw (9,2) node {\includegraphics[width=20em]{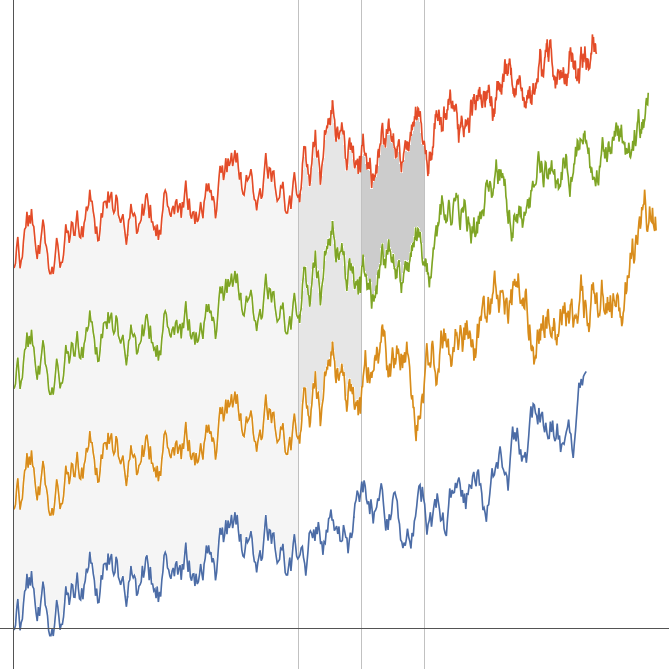}};
    \draw[fill=black] (5.33,2.8) circle (.05);
    \draw[fill=black] (5.33,1.5) circle (.05);
    \draw[fill=black] (5.33,0.1) circle (.05);
    \draw[fill=black] (5.33,-1.3) circle (.05);
    \draw (5.33,-1.3) -- (5.33,0.1) -- (5.33,1.5) -- (5.33,2.8) -- (6.4,5.3) node[anchor = south]
    {$(x_0,Z(x_0))$};
    \draw[fill=black] (8.58,2.8+0.8) circle (.05);
    \draw[fill=black] (8.58,1.5+0.8) circle (.05);
    \draw[fill=black] (8.58,0.1+0.8) circle (.05);
    \draw[fill=black] (8.58,-1.3+0.8) circle (.05);
    \draw (7.4,4.5) -- (8.58,2.8+0.8) -- (8.58,1.5+0.8) -- (8.58,0.1+0.8)--(8.58,-1.3+0.8) ;
    \draw (7.4,4.5) node[anchor=south]{$(x_1,Z(x_1))$};
    \draw[fill=black] (9.3,2.8+1.1) circle (.05);
    \draw[fill=black] (9.3,1.5+1.1) circle (.05);
    \draw[fill=black] (9.3,0.1+1.1) circle (.05);
    \draw (9.3,1.2) -- (9.3,2.6) -- (9.3, 3.9) -- (9.0, 5.8) node[anchor=south]{$(x_2,Z(x_2))$};
    \draw[fill=black] (10.03,2.8+1.3) circle (.05);
    \draw[fill=black] (10.03,1.5+1.3) circle (.05);
    \draw (10.03,2.8) -- (10.03, 4.1) -- (10.2, 5.2) node[anchor=south]{$(x_3,Z(x_3))$};
    \draw[fill=black] (11.91,1.5) circle (.05) node[anchor=west]{$(y_0,Z(y_0))$};
    \draw[fill=black] (12.71,3.2) circle (.05) node[anchor=west]{$(y_1,Z(y_1))$};
    \draw[fill=black] (12.61,4.8) circle (.05) node[anchor=west]{$(y_2,Z(y_2))$};
    \draw[fill=black] (11.99,5.2) circle (.05) node[anchor=south west]{$(y_3,Z(y_3))$};
\end{tikzpicture}
\caption{Labelled structure of an $F_4$ approximation (left) with line segments $L_0,L_1,L_2$. Gaussian process indexed by the
$F_4$ subtree (right). From top to bottom, the four graphs show the biased Gaussian process $Z$ along the
geodesic from $x_0$ to $y_3,y_2,y_1$, and $y_0$, respectively.}
    \label{fig:BrownianOnTree}
\end{figure}

  The Gaussian $Z$ on $[0,1]$ has the property that $Z\rvert_{L_i}$ is independent of $Z\rvert_{L_j}$ if
  $i\neq j$ and is a Wiener process on $f(L_i)$.
  Let $s\in L_i, t\in L_j$ and assume without loss of generality that $s<t$. Then, 
  \begin{align*}
    &\Cov(Z(s)-Z(\inf L_i),Z(t)-Z(\inf L_j)) \\
    =& 
    \Cov(Z(s),Z(t))-\Cov(Z(s),Z(\inf L_j))-\Cov(Z(t),Z(\inf L_i))+\Cov(Z(\inf L_i),Z(\inf L_j))\\
    =&\min_{u\in[s,t]}f(u) - \min_{u\in[s,\inf L_j]} f(u) - \min_{[\inf L_i,t]} f(u) + \min_{[\inf
    L_i, \inf L_j]} f(u)\\
    =& \min_{u\in [s,\inf L_j]}f(u) - \min_{u\in [s,\inf L_j]}f(u) - \min_{u\in [\inf L_i, \inf L_j]}f(u)
    +\min_{u\in [\inf L_i, \inf L_j]} f(u)
    =0.
  \end{align*}
  Now let $s,t\in L_i$. Then,
  \begin{equation*}
    \Cov(Z(s), Z(t)) = \min \{f(u)\,:\, \min\{s,t\} \leq u \leq
    \max\{s,t\}\} = f(\min\{s,t\}) =
    \min\{f(s),f(t)\}.
  \end{equation*}
  Let $G_i$ be the unique orientation preserving similarity mapping the interval $[0,1]$ onto $f(L_i)=
  [\max\{f(x_i),f(x_{i+1})\},f(y_i)]$.
  Then, 
  \[Z_i(s) = (Z\circ f^{-1}\circ G_i(s)-Z\circ f^{-1}\circ G_i(0))/\sqrt{\lvert f(L_i)\rvert}\]
  is equal in distribution to a Wiener
  process $W$ and, as $\lvert f(L_i)\rvert \sim \lvert f(L_j)\rvert$, there exists $p_n$ independent of $i$ such that 
  \begin{equation*}
    \Prob\{\lVert Z_i(s) - s\rVert_\infty < 2^{-n}\} \geq p_n>0.
  \end{equation*}
  We can argue similarly for all the joined pieces $J_i$. They are independent (conditioned on
  starting value) and have length at most $(1+2^{1-n}) \lvert f(L_0)\rvert$. Hence the probability that
  the process deviates at most $2^{-n}$ is also comparable to $p_n$ and we assume without loss of
  generality that $p_n$ is a lower bound.
  Thus, the probability that this holds for all $i$ and all connecting tree pieces is at least
  $p_n^{n+1+\log_2 (n+1)}>0$.

  We now establish that these events are independent for distinct intervals. 
  Let $x_0(n,k)$ be as in \eqref{eq:x0def} and let $[s_0(n,k),t_0(n,k)]\subseteq [s_n^k,t_n^k]$ 
  be the smallest interval such that $f(s_0(n,k)) = f(t_0(n,k)) = x_0$. Therefore $f(u)\geq x_0$ for all
  $u\in(s_0(n,k),t_0(n,k))$.
  Assume that at least one of $n\neq n'$ and $k\neq k'$ is given, then the following holds for all
  $u_1, u_2\in
  [s_0(n,k),t_0(n,k)]$ and $v_1,v_2\in [s_0(n',k'),t_0(n',k')]$, assuming without loss of generality
  that $t_0(n,k) \leq s_0(n',k')$,
  \begin{align*}
    &\Cov(Z(u_1)-Z(u_2),Z(v_1)-Z(v_2))\\
    &=\min_{u\in[u_1,v_1]}f(u) - \min_{u\in [u_1,v_2]} f(u)  - \min_{u\in [u_2,v_1]}f(u) + \min_{u\in
    [u_2,v_2]} f(u)\\
    &=\min_{u\in[t_0(n,k),s_0(n',k')]}f(u)
    -\hspace{-1em}\min_{u\in[t_0(n,k),s_0(n',k')]}f(u)-\hspace{-1em}\min_{u\in[t_0(n,k),s_0(n',k')]}f(u)+\hspace{-1em}
    \min_{u\in[t_0(n,k),s_0(n',k')]}f(u)\\
    &=0.
  \end{align*}
  Thus the (relative) Gaussian processes chosen earlier are independent if at least one of $n,n'$ or
  $k,k'$ differ.
  Thus a standard Borel-Cantelli argument shows that there are infinitely many intervals where this
  occurs for every $n$, almost surely.

  The last thing to show is that these structures give rise to $(n-1)$-stars. This involves estimating
  the Brownian map metric as well as the following estimates.
  Let $x,y\in\M$, then
  \[
    Z(x)+Z(y) - 2Z(x\wedge y)\quad \leq\quad D(x,y)\quad \leq\quad D^o(x,y),
    \]
  where $x\wedge y$ is the lowest common ancestor of $x$ and $y$ in $T_{\ex}$
  and $\min_{z\in[x,y]}Z(z) \geq \min_{z\in[0,1]}
  Z(z)$.
  We thus estimate, for $i\neq j$,
  \begin{align*}
    0 \leq D(x_i,x_j) &\leq D^o (x_i,x_j)
    =Z(x_i)+Z(x_j) - 2\min_{s\in[x_i,x_j]} Z(s)\\
    &\leq 2 \max_{k} Z(x_k) - 2 \min_k Z(x_k)
    \leq 4\cdot 2^{-n} \max_k \sqrt{|f(L_k)|}
    \intertext{as well as for the branches}
    D(x_1,y_i) &\leq D(x_1,x_i)+D(x_i,y_i)\\
    &\leq 4\cdot 2^{-n} \max_k\sqrt{|f(L_k)|} + (1+2\cdot 2^{-n})\sqrt{|f(L_i)|}\\
    &\leq (1+6\cdot 2^{-n}) \max_k \sqrt{|f(L_k)|}
    \intertext{and}
    D(y_i,y_j) &\leq D(x_i,x_j)+D(x_i,y_i)+D(x_j,y_j)\\
    &\leq (2+16\cdot 2^{-n}) \max_k \sqrt{|f(L_k)|}.
  \end{align*}
  The lower bounds are given by
  \begin{align*}
    D(x_1,y_i)&\geq Z(y_i)+Z(x_1)-2Z(x_1) = Z(y_i)-Z(x_1)\\
    &\geq Z(y_i)- Z(x_i) - |Z(x_1)-Z(x_i)|\\
    &\geq \sqrt{|f(L_i)|}-2 \cdot 2^{-n} \max_k\sqrt{|f(L_k)|}\\
    &\geq (1-6\cdot 2^{-n}) \max_k\sqrt{|f(L_k)|}
    \intertext{and}
    D(y_i,y_j)&\geq Z(y_i)+Z(y_j)-2Z(y_i\wedge y_j) 
    \geq (2-16\cdot 2^{-n}) \max_k \sqrt{|f(L_k)|}.
  \end{align*}
  Analogous to the proof of Theorem~\ref{thm:CRTStarry} we see that the collection of points $y_i$
  together with centre $x_1$, form an approximate $(n-1)$-star. Since this is true, almost surely,
  for every $n$ we have shown that $(\M,D)$ is starry.
\end{proof}

\section{The continuum tree as a dual space}
\label{sect:dualDiscussion}
While it is not generally true that the operator mapping continuous excursion functions on
$[0,1]$ to continuum trees is invertible, one can see the tree associated with a function as an
indicator of its irregularity. For instance, local maxima of the excursion function are exactly the
leaves of the associated continuum tree (with the exception of the root).
One might conjecture a strong link between dimensions of the graph of an excursion function and its
associated continuum tree. 
\begin{question}\label{thm:question}
  Let $f$ be an excursion function on $[0,1]$ with graph $\mathcal{G}_f=\{(t,f(t)): t\in[0,1]\} \subseteq \R^2$.
  Suppose additionally that $\dim \mathcal{G}_f = s$.
  What can we say about the dimension\footnote{Here, dimension refers to any of the commonly
  considered dimensions: Hausdorff, packing, box-counting, Assouad, and lower dimension.}  $\dim T_f$?
\end{question}
Given the extremal nature of the Assouad dimension one might further conjecture that functions whose
graph has Assouad dimension strictly larger than $1$ have an associated continuum tree that is
starry.
\begin{conjecture}
  Let $f$ be an excursion function such that $\dimA \mathcal{G}_f >1$. Then $T_f$ is starry.
\end{conjecture}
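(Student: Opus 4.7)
The plan is to mimic the strategy of Theorem~\ref{thm:CRTStarry} in a deterministic setting: extract from $\dimA \mathcal{G}_f > 1$ infinitely many approximate zig-zag ($F_n$-like) substructures at small scales, and then invoke the continuum-tree distance calculation to produce approximate $n$-stars in $T_f$.

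The first step is to translate the Assouad-dimension hypothesis into a quantitative oscillation statement for $f$. Fix $s' \in (1, \dimA \mathcal{G}_f)$. Then at infinitely many pairs of scales $r < R$ there is a window of horizontal width comparable to $R$ in which the graph requires at least $(R/r)^{s'}$ balls of radius $r$ to cover, by the definition of $\dimA$. A bi-Lipschitz curve would need only $\sim R/r$ such balls; the extra factor of $(R/r)^{s'-1}$ reflects vertical oscillation of $f$ and, via a standard counting argument, yields at least $\gtrsim (R/r)^{s'-1}$ local maxima of $f$ in the window whose heights exceed the neighbouring local minima by at least $r$.

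The second step is a pigeonhole argument on heights followed by the tree-distance computation. Group the local maxima in the window by their height into bins of width $\eta r$; for $r$ small enough, the bin with the most maxima contains at least $n$ of them. Choosing $n$ such maxima $y_1 < \dots < y_n$ together with a centre point $x_0$ realising a local minimum of $f$ on $[y_1, y_n]$ should yield an approximate $F_n$-pattern. Affinely rescaling this window by a factor of $\sqrt{R}$ to match the normalisation used in Lemma~\ref{thm:excursionApprox} then reduces the remaining distance estimates in $T_f$ to those already carried out in the proof of Theorem~\ref{thm:CRTStarry}, so that $x_0, y_1, \dots, y_n$ descend under $\pi$ to an $(A, \eta)$-approximate $n$-star.

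The main obstacle is not the combinatorial pigeonhole but rather certifying that between any two chosen local maxima the function $f$ actually dips all the way down to near $f(x_0)$. The Assouad dimension provides a count of oscillations but no structural control: \emph{a priori}, $f$ could realise $\dimA \mathcal{G}_f > 1$ via a nested cascade of oscillations in which the valleys between large peaks are themselves filled with secondary peaks, so the effective valley level drifts upward and no clean zig-zag survives at any single scale. Overcoming this would likely require passing to a weak tangent of $\mathcal{G}_f$ that attains the Assouad dimension (such tangents exist in the sense of Mackay--Tyson) and showing that some such tangent must admit an $F_n$-like substructure that can be transferred back to $f$ via an approximation mechanism in the spirit of Lemma~\ref{thm:excursionApprox}. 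This structural step is the essential difficulty and is presumably why the statement is posed as a conjecture rather than a theorem.
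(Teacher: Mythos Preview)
The paper does not prove this statement: it is explicitly posed as a \emph{conjecture} in Section~\ref{sect:dualDiscussion}, with no proof offered. There is therefore no ``paper's proof'' to compare your proposal against.

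Your sketch is a reasonable heuristic outline of how one might try to attack the conjecture, and you correctly diagnose the central obstruction yourself. The Assouad hypothesis $\dimA\mathcal{G}_f>1$ does guarantee, at arbitrarily small scales, windows in which $f$ oscillates at least $\sim (R/r)^{s'-1}$ times with amplitude $\gtrsim r$; but, as you note, it gives no control on the \emph{depths} of the valleys between the selected peaks. For the $d_f$-distance computation to yield an approximate $n$-star one needs $\min_{[y_i,y_j]}f$ to be close to $f(x_0)$ for every pair $i\neq j$, and nothing in the covering-number lower bound forces this: nested or hierarchical oscillation patterns can realise large Assouad dimension while never producing $n$ peaks at comparable heights separated by valleys of comparable depth at a single scale. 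Your suggested workaround via weak tangents is natural but speculative---passing to a tangent may produce a nicer limit object, yet one still has to show that the tangent contains a genuine $F_n$-pattern and then pull the star structure back to $T_f$ itself, neither of which is automatic.

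In short, your proposal is not a proof but a plausible strategy with an honestly identified gap, and that gap is exactly the reason the paper leaves the statement as a conjecture.
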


This is certainly not a necessary condition as Example 5.1 below shows.

Establishing such relationships between dimensions and irregular curves is at the heart of fractal
geometry, see e.g.\ \cite{BishopPeres} and \cite{FractalGeo3}.
The dimension theory of the family of Weierstra\ss\
functions has only recently been analysed and self-affine curves are still not fully understood,
see e.g.\ \cite{Barany18}. 
One of the major difficulty for affine functions are complicated relationships between the scaling
in the horizontal as well as vertical axis and the continuum tree might get around this problem by
separating the two scales in terms of placement of nodes and length of subtrees. These may give rise
to a self-similar set in an abstract space other than $\R^d$ that may be easier to understand.
Knowing bounds such as those asked in Question \ref{thm:question} may lead to a better understanding
of the self-affine theory and this ``dual space'' could solve many open
questions for singular functions.

\subsection*{Example 5.1: Excursion with low Assouad dimension whose continuum tree is infinite
dimensional.}\label{ex:lowAss}
Let $s_n(x)$ be the positive triangle function with slope $2$ and $n$ maxima, 
\[
  s_n(x) = \begin{cases} 
    2(x-\tfrac{k}{n})& x\in[\tfrac{k}{n}, \tfrac{k}{n}+\tfrac{1}{2n})\text{ and }0\leq k<n\\
      -2(x-\tfrac{k+1}{n})& x\in[\tfrac{k}{n}+\tfrac{1}{2n}, \tfrac{k+1}{n}) \text{ and }0\leq k <
	n\\
	0& x=1
      \end{cases}
\]
Let us define disjoint dyadic intervals $[a_n,b_n]\subset[0,1/2]$ by $a_n = 1/2-2^{-(n+1)}$ and $b_n
= a_n + 2^{-(n+3)}$.
 Note that $a_{n+1}-b_n = b_n-a_n$, and so, the gap between $[a_n,b_n]$
and $[a_{n+1},b_{n+1}]$ is equal in size to the length of the former interval. We define
\[
  s(x) = \sum \chi_{[a_n,b_n]}(x)\cdot (b_n-a_n)\cdot s_{(n+1)^n}((x-a_n)/(b_n-a_n))
\]
and note that this function is triangular on $[a_n,b_n]$ with slope $2$ but decreasing amplitude and
increasing frequency, such that it has $(n+1)^n$ maxima on $[a_n,b_n]$. The map $S:(x,y)\mapsto
(x,y+s(x))$ is easily shown to be bi-Lipschitz on $[0,1]\times \R$ with Lipschitz constant
$\sqrt{5}$. Further, let
\[
  g_1(x) = \int_0^x \left(1-\sum_{n=1}^\infty \chi_{[a_n,b_n]}(y)\right)dy.
\]
It is clear that $g_1(x)$ is a non-decreasing function with slope $0$ in $[a_n,b_n]$ and slope $1$
otherwise. Let $g_2(x) = -2g_1(1/2) x + 2g_1(1/2)$ and
\[
  g(x) = \begin{cases} g_1(x) & 0\leq x < 1/2\\
    g_2(x) & 1/2\leq x \leq 1
  \end{cases}
\]
Similarly, $G:(x,y)\mapsto (x,y+g(x))$ is bi-Lipschitz on $[0,1]\times\R$ with Lipschitz constant
$\sqrt{2}$.
It can be checked that $g(x)+s(x)$ is an excursion function on $[0,1]$ see also
Figure~\ref{fig:allexcursion}.
The graph $\{(x,g(x)+s(x)):x\in[0,1]\}$ is the image of $[0,1]\times\{0\}$ under $S\circ G$ and hence is a bi-Lipschitz image
of the unit line. Therefore the Assouad dimension of the excursion graph $g(x)+s(x)$ is $1$, i.e.\
as low as possible for a continuous function.
To see that the resulting continuum tree $T_{g(x)+s(x)}$ is starry, one needs to observe that the
triangle function gives the excursion functions $(n+1)^n$ peaks in $[a_n,b_n]$ whose amplitude is
small compared to $g(a_{n+1})$. This gives rise to approximate $(n+1)^n$-stars centred at $a_n\in
T_{g(x)+s(x)}$ for all $n$ and thus $T_{g(x)+s(x)}$ is starry. See Figure~\ref{fig:exampleset} for
an illustration of $T_{g(x)+s(x)}$.

\noindent
\begin{minipage}[c]{.45\textwidth}
  \begin{minipage}[c]{1.0\textwidth}
        \includegraphics[width=\textwidth]{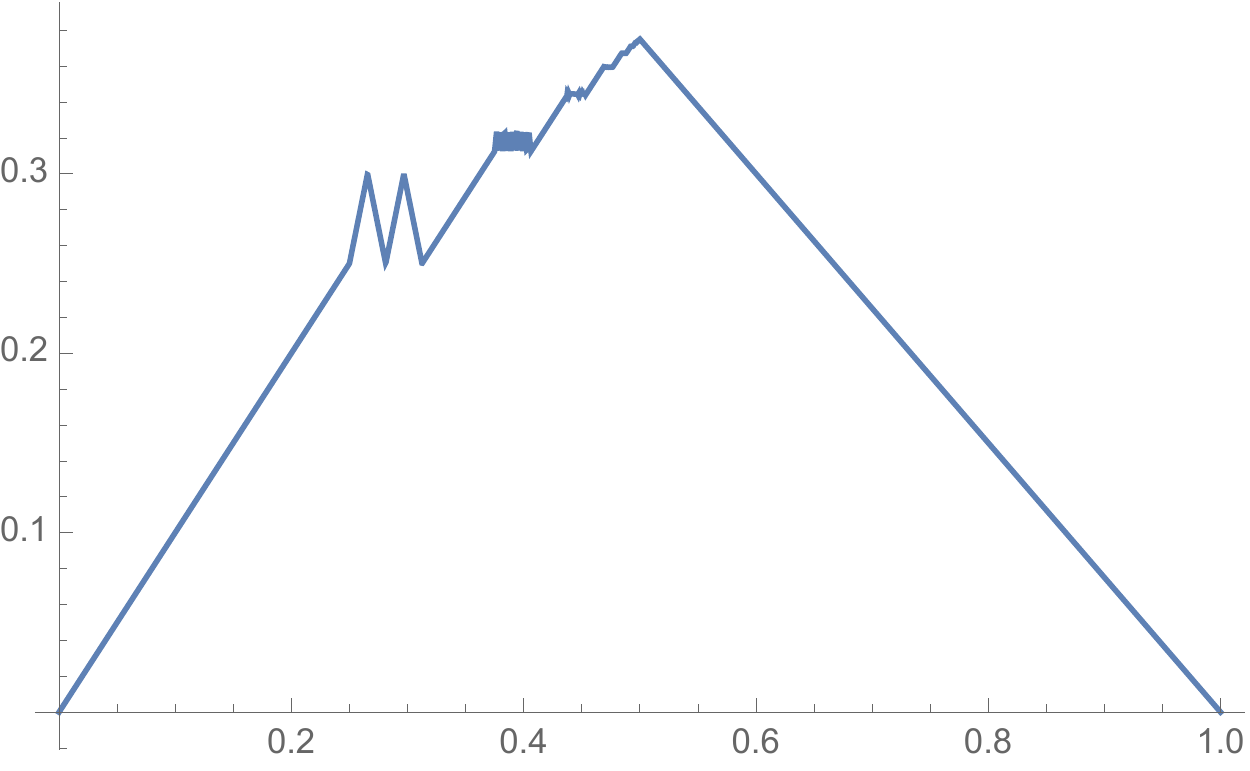}
	\captionof{figure}{The excursion function $g(x)+s(x)$.}
        \label{fig:allexcursion}
      \end{minipage}

\begin{minipage}[c]{1.0\textwidth}
        \includegraphics[width=\textwidth]{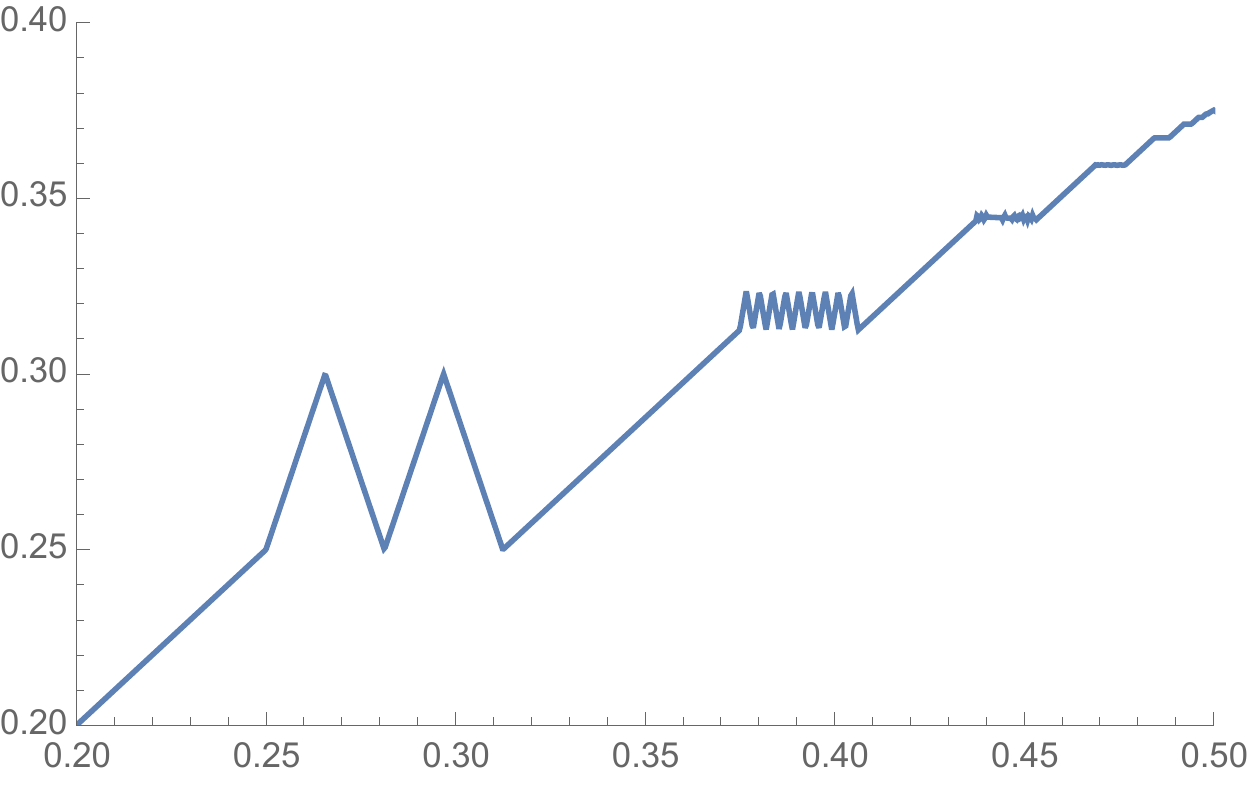}
	\captionof{figure}{Close up of Figure~\ref{fig:allexcursion}.}
        \label{fig:zoomin}
      \end{minipage}
\end{minipage}
\begin{minipage}[c]{.56\textwidth}
        \includegraphics[width=\textwidth]{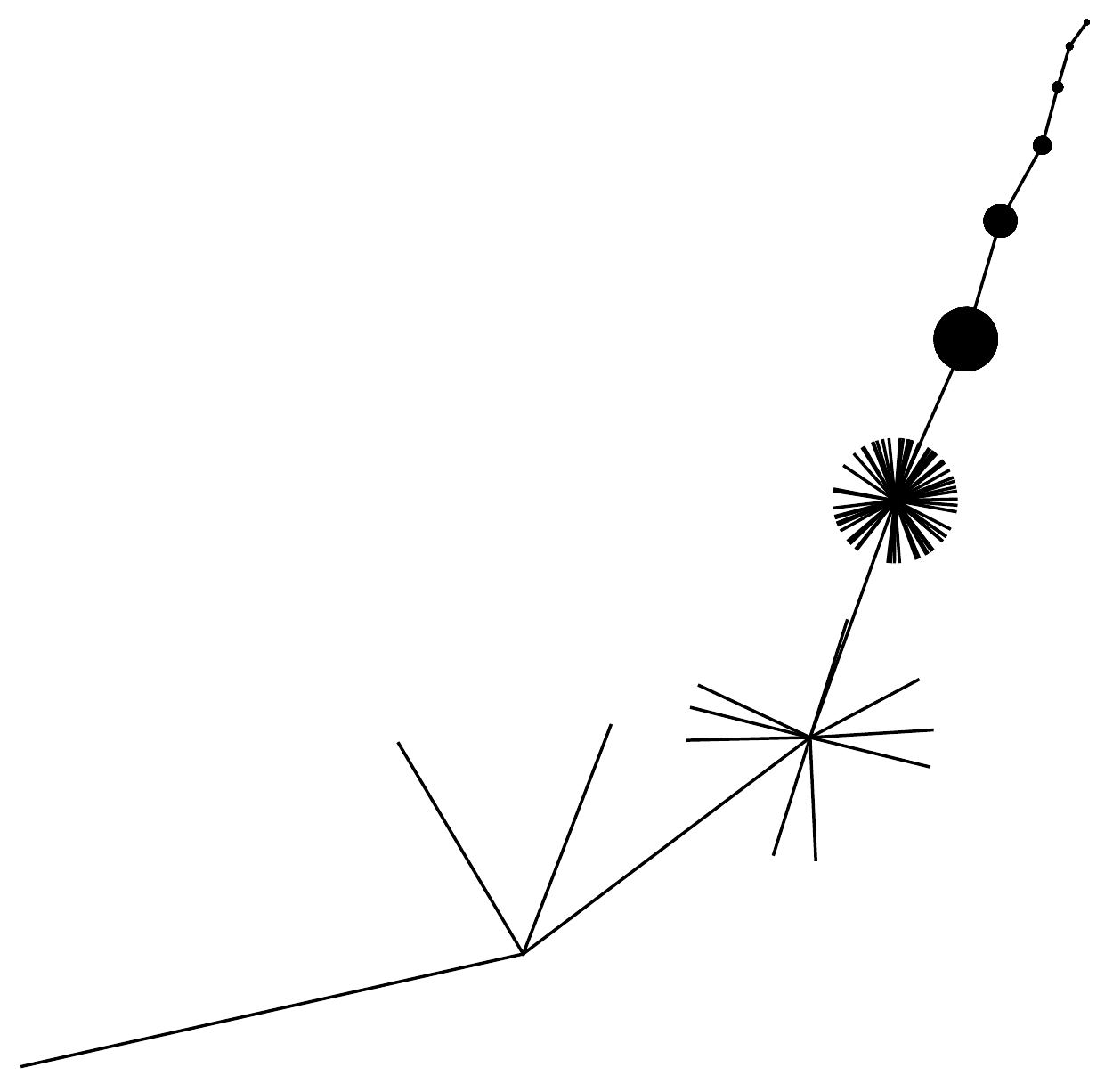}
	\captionof{figure}{The continuum tree $T_{g(x)+s(x)}$.}
        \label{fig:exampleset}
 \end{minipage}

\subsection*{Acknowledgements}
The work on the BCRT first arose from a conversation with Noah Forman while the author was visiting
the University of Washington in April 2018. 
ST thanks the University of Washington, and Jayadev Athreya and Noah Forman in particular, for the
financial support, hospitality, and inspiring research atmosphere.
The extension to the Brownian map was inspired by a
question of Xiong Jin at the \emph{Thermodynamic Formalism, Ergodic Theory and Geometry Workshop} at
Warwick University in July 2019 and ST thanks Xiong Jin for many helpful conversations.
Finally, I thank the referee for their helpful comments on an earlier version of this manuscript.

\end{document}